\begin{document}

\title{Beyond recognizing well-covered graphs}

\author{Carl Feghali \and Malory Marin
 \and R\'emi Watrigant}
 
 \institute{Univ Lyon, CNRS, ENS de Lyon, Université Claude Bernard Lyon 1, LIP UMR5668, France\\
\email{firstname.lastname@ens-lyon.fr}}

\maketitle

\begin{abstract}
 We prove a number of results related to the computational complexity of recognizing well-covered graphs.  Let $k$ and $s$ be positive integers and let $G$ be a graph. Then $G$ is said \begin{itemize}
     \item $\mathbf{W_k}$ if for any $k$ pairwise disjoint independent vertex sets $A_1, \dots, A_k$ in $G$, there exist $k$ pairwise disjoint maximum independent sets $S_1, \dots,S_k$ in $G$ such that $A_i \subseteq S_i$ for $i \in [k]$.
     \item $\mathbf{E_s}$ if every independent set in $G$ of size at most $s$ is contained in a maximum independent set in $G$. 
 \end{itemize}
Chvátal and Slater (1993) and Sankaranarayana and Stewart (1992) famously showed that recognizing $\mathbf{W_1}$ graphs or, equivalently, well-covered graphs is coNP-complete.  We extend this result by showing that recognizing $\mathbf{W_{k+1}}$ graphs in either $\mathbf{W_k}$ or $\mathbf{E_s}$ graphs is coNP-complete. This answers a question of Levit and Tankus (2023) and strengthens a theorem of Feghali and Marin (2024). We also show that recognizing $\mathbf{E_{s+1}}$ graphs is $\Theta_2^p$-complete even in $\mathbf{E_s}$ graphs, where $\Theta_2^p = \text{P}^{\text{NP}[\log]}$ is the class of problems solvable in polynomial time using a logarithmic number of calls to a \textsc{Sat} oracle. This strengthens a theorem of Berg\'e, Busson, Feghali and Watrigant (2023). We also obtain the complete picture of the complexity of recognizing chordal $\mathbf{W_k}$ and
$\mathbf{E_s}$ graphs which, in particular, simplifies and generalizes a result of Dettlaff, Henning and Topp (2023).
\end{abstract}

\section{Introduction}

In a graph, an \emph{independent set} is a set of pairwise non-adjacent vertices. An independent set is said to be \emph{maximum} if it is of maximum size, and \emph{maximal} if it is not a subset of any other independent set. Merging these two notions, Plummer~\cite{PLUMMER197091} introduced the class of \emph{well-covered} graphs, defined as those graphs in which every maximal independent set is also maximum. Well-covered graphs have since been extensively studied; for instance, the problem of recognizing well-covered graphs is coNP-complete on general graphs~\cite{CHVATAL1993179,Sankaranarayana}, but polynomial-time solvable on
claw-free graphs \cite{TANKUS}, bipartite graphs~\cite{RavindraWell}, graphs with girth at least $5$~\cite{Finbow}, graphs with no cycle of length $4$ and $5$~\cite{FinbowC4C5}, graphs of bounded degree~\cite{Caro}, chordal graphs~\cite{Prisner}, cographs~\cite{Klein2013}, and perfect graphs with bounded clique number \cite{DEAN199467}. 
See the seminal survey by Plummer for more details \cite{Plummer1993}.

In this paper, we continue the study of well-covered graphs from a complexity viewpoint, though in a broader context. 
Staples introduced in 1975~\cite{Sta1} the hierarchy $\mathbf{W_k}$, as
a generalization of well-covered graphs. For a positive integer $k$, a graph $G$ is $\mathbf{W_k}$ if for any $k$ pairwise disjoint independent vertex subsets $A_1, \dots, A_k$ in $G$, there exist $k$ pairwise disjoint maximum independent sets $S_1, \dots, S_k$ in $G$ such that $A_i \subseteq S_i$ for $i \in [k]$. Observe, by definition, that a graph is $\mathbf{W_1}$ if and only if it is well-covered, and that $\mathbf{W_{k+1}} \subseteq \mathbf{W_k}$ for each $k \geqslant 1$. 
The class of $\mathbf{W_2}$ graphs was studied more specifically due to its relation to \emph{shedding vertices}: a vertex $v$ in a graph $G$ is \textit{shedding} if there does not exist an independent set in $V (G) \setminus N [v]$ which dominates $N(v)$. In particular, a graph without isolated vertices is $\mathbf{W_2}$ if, and only if, all of its vertices are shedding \cite{levit2017w2}. Shedding vertices were also used in order to characterize well-covered graphs of high girth~\cite{Finbow} (they are called \textit{extendable vertices}). 
Deciding whether a vertex of a graph is shedding is coNP-complete \cite{levit2023recognizing}, even in triangle-free graphs.  Similarly, recognizing $\mathbf{W_k}$ graphs was recently shown to be coNP-complete  (for each fixed $k\geqslant 2$)~\cite{feghali2023three}, answering a recent question from \cite{levit2023recognizing}. However, the more challenging complexity problems (also raised in \cite{levit2023recognizing}) of recognizing $\mathbf{W_2}$ graphs and shedding vertices when the input graph is well-covered remained open.

\begin{problem}[\cite{levit2023recognizing}]\label{problem1}
Determine the complexity of recognizing $\mathbf{W_2}$-graphs when the input graph is well-covered.  
\end{problem}

\begin{problem}[\cite{levit2023recognizing}]\label{problem2}
Determine the complexity of recognizing shedding vertices when the input graph is well-covered.  
\end{problem}

In this paper, we resolve Problems \ref{problem1} and \ref{problem2} in the following theorems. 

\begin{theorem}\label{thm:wk}
Let $k \geqslant 2$ and $G$ be $\mathbf{W_{k-1}}$. Then deciding if  $G$ is $\mathbf{W_k}$ is coNP-complete. 
\end{theorem}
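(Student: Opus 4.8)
The plan is to prove coNP-completeness by reducing from a known coNP-complete problem while carefully controlling the input so that it lands inside the class $\mathbf{W_{k-1}}$. The natural source is the recognition of $\mathbf{W_k}$ graphs on general graphs, which is coNP-complete for each fixed $k\geqslant 2$ by the cited result of \cite{feghali2023three}; alternatively one can reduce directly from a \textsc{Sat}-style or independent-set-based problem as in the classical Chvátal--Slater and Sankaranarayana--Stewart constructions. Membership in coNP is immediate: a certificate that $G$ is \emph{not} $\mathbf{W_k}$ consists of $k$ pairwise disjoint independent sets $A_1,\dots,A_k$ together with, for each way of extending them, a witness of failure; more concretely, one guesses the $A_i$ and verifies in polynomial time (using that the maximum independent set size $\alpha(G)$ is fixed once we also guess it, or by a single \textsc{Sat} call) that they cannot be simultaneously extended to disjoint maximum independent sets. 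So the whole content is the hardness direction.

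\smallskip

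The key idea I would use is a \emph{gadget that lifts the level of the hierarchy by one while guaranteeing the previous level}. Starting from an arbitrary graph $H$ whose $\mathbf{W_k}$-status we wish to decide, I would construct a graph $G = f(H)$ such that: (i) $G$ is always $\mathbf{W_{k-1}}$, regardless of $H$; and (ii) $G$ is $\mathbf{W_k}$ if and only if $H$ is $\mathbf{W_k}$ (or, if reducing from an independent-set problem, if and only if the corresponding formula is unsatisfiable). The construction would take disjoint copies of $H$ and attach a universal-type gadget — for instance, adding one or more apex vertices or a carefully chosen clique whose neighbourhoods force every maximum independent set to restrict nicely to each copy. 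The apex/padding vertices serve two purposes simultaneously: they ensure well-coveredness of the composite object, and they can absorb one of the $k$ prescribed independent sets, effectively reducing a $\mathbf{W_k}$ question about $G$ to a $\mathbf{W_{k-1}}$-style question about the copies, which is exactly what lets property (i) hold unconditionally.

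\smallskip

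In more detail, I expect to work inductively on $k$. For the base case $k=2$ one reduces from the coNP-complete problem of recognizing $\mathbf{W_1}$ (well-covered) graphs, building from $H$ a well-covered graph $G$ that is $\mathbf{W_2}$ exactly when $H$ meets the target condition; this simultaneously answers Problem~\ref{problem1} and, via the shedding-vertex characterization of $\mathbf{W_2}$ graphs from \cite{levit2017w2}, Problem~\ref{problem2}. For the inductive step I would compose the base gadget with itself, using the fact that $\mathbf{W_{k+1}} \subseteq \mathbf{W_k}$ to carry the lower-level guarantee upward: each application of the gadget promotes $\mathbf{W_{k-1}}$ to a graph that is provably $\mathbf{W_{k}}$-or-not according to $H$, while staying inside $\mathbf{W_{k-1}}$.

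\smallskip

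The main obstacle will be property (i) — proving unconditionally that $G$ is $\mathbf{W_{k-1}}$ no matter how badly $H$ fails to be $\mathbf{W_k}$. Verifying the forward equivalence (ii) tends to be a matter of tracking how prescribed independent sets extend, but showing that \emph{every} choice of $k-1$ disjoint independent sets in $G$ extends to disjoint maximum independent sets requires the gadget to behave robustly against all adversarial configurations, including those that spread across several copies of $H$ and the padding. I would therefore spend most of the effort on a structural lemma describing exactly which sets are maximum independent sets of $G$, how they intersect the gadget, and why there is always enough ``room'' in the apex/clique structure to complete any $k-1$ disjoint independent sets; controlling this interaction — rather than the routine size computations — is where the real care is needed.
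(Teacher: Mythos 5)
Your proposal correctly identifies the shape of the argument the paper uses — a construction that is \emph{unconditionally} $\mathbf{W_{k-1}}$ and whose $\mathbf{W_k}$-status encodes the answer to a coNP-hard question — and you rightly flag that establishing property (i) for every adversarial configuration is where the work lies. But the proposal stops exactly there: no gadget is actually given, and the one concrete suggestion you make (reduce from $\mathbf{W_k}$-recognition on an \emph{arbitrary} graph $H$, taking disjoint copies of $H$ plus apex vertices) is doubtful. If $H$ is not even well-covered, any $G=f(H)$ that is forced to be $\mathbf{W_{k-1}}$ must already repair the entire maximal-independent-set structure of $H$, and it is unclear how the $\mathbf{W_k}$-status of $H$ could survive that repair; no ``apex or padding clique'' construction obviously achieves both (i) and (ii) simultaneously. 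The paper avoids this by reducing directly from the complement of \textsc{3-Sat}: the clause clique $K$, an auxiliary clique $U$ of size $k-1$ fully joined to $K$, and a $K_{2k}$ per variable ($k$ copies of each literal). The sizes $k-1$ and $2k$ are exactly what make the unconditional $\mathbf{W_{k-1}}$ claim go through ($U$ absorbs up to $k-1$ prescribed sets meeting $K\cup U$, and each variable clique always has $k$ vertices avoiding any single clause vertex's neighbourhood), while a satisfying assignment is precisely an independent set whose failure to extend past $K$ witnesses non-$\mathbf{W_k}$-ness. Supplying these quantitative choices and verifying them is the substance of the proof, and it is absent from your write-up; there is also no induction on $k$ in the paper — a single construction works for each $k$.

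A secondary but genuine flaw is your membership argument. A certificate consisting of $A_1,\dots,A_k$ ``together with, for each way of extending them, a witness of failure'' is exponential, and a \textsc{Sat} oracle call is not available inside an NP verifier. Verifying that given disjoint independent sets cannot be simultaneously extended to disjoint maximum independent sets is not obviously polynomial-time, so membership in coNP is not ``immediate''; the paper simply cites \cite{feghali2023three} for it, and you should do the same rather than assert an incorrect certificate.
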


\begin{theorem}\label{thm:shedding}
Deciding whether a vertex is shedding in a well-covered graph is coNP-complete. 
\end{theorem}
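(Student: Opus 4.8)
The plan is to establish coNP-completeness by showing membership and hardness separately, with the hardness obtained by a reduction from the problem of recognizing $\mathbf{W_2}$ graphs among well-covered graphs, which is coNP-complete by Theorem~\ref{thm:wk} with $k=2$. Membership in coNP is the easy part: the complementary property is in NP, since a vertex $v$ is \emph{not} shedding exactly when there is an independent set $I \subseteq V(G) \setminus N[v]$ such that every vertex of $N(v)$ has a neighbour in $I$, and any such $I$ is a polynomial-size certificate that can be verified in polynomial time. Hence deciding whether $v$ is shedding lies in coNP, and it remains to prove coNP-hardness in the class of well-covered graphs.

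For the hardness, I would use the characterization that a graph without isolated vertices is $\mathbf{W_2}$ if and only if each of its vertices is shedding~\cite{levit2017w2}. Thus, starting from a well-covered graph $G$ without isolated vertices, the statement ``$G$ is $\mathbf{W_2}$'' is equivalent to ``every vertex $u$ of $G$ is shedding'', i.e. to ``for every $u$, no independent set of $V(G)\setminus N[u]$ dominates $N(u)$''. The whole difficulty is that this is a statement quantified over all vertices $u$, whereas the target problem concerns a single distinguished vertex. The goal of the reduction is therefore to compress all of these per-vertex shedding tests into the shedding status of one new vertex $v^\star$, so that $v^\star$ is shedding if and only if $G$ is $\mathbf{W_2}$.

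Concretely, I would build a graph $H$ from $n=|V(G)|$ disjoint copies $G^1,\dots,G^n$ of $G$, where copy $G^i$ is used to test the $i$-th vertex $u_i$, together with a single new vertex $v^\star$ whose neighbourhood is the union over $i$ of (a copy of) $N(u_i)$ inside $G^i$. A \emph{selection gadget}, namely a clique of ``choice'' vertices $c_1,\dots,c_n$ with $c_i$ adjacent to every tested neighbourhood except the one in $G^i$, ensures that any independent dominator of $N(v^\star)$ can activate at most one copy, so that dominating $N(v^\star)$ forces the real shedding test to be solved in exactly one copy $G^i$; this makes $v^\star$ non-shedding if and only if \emph{some} $u_i$ is non-shedding in $G$, i.e. if and only if $G$ is not $\mathbf{W_2}$. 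One subtlety that must be handled with care is to prevent the copy-vertex $u_i$ itself from trivially dominating its own tested neighbourhood: since the shedding test for $u_i$ requires the dominator to avoid $N[u_i]$, I would attach to each $u_i$ a private pendant $u_i^\dagger$ (lying outside $N[v^\star]$) so that $u_i$ can be excluded from the dominator without leaving any vertex of $N(v^\star)$ permanently undominated.

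The main obstacle is ensuring that $H$ is \emph{well-covered}, as required by the statement, without either making $v^\star$ shedding ``for free'' or trivializing its test. Attaching pendants is the natural way to force uniform size of maximal independent sets (a disjoint union of well-covered graphs, each equipped with pendants, is well-covered), but a pendant on a vertex $w$ makes $w$ shedding and, worse, a pendant on a vertex of $N(v^\star)$ would let that vertex be dominated cheaply and destroy the test. I would therefore pad every vertex \emph{except} $v^\star$ and its neighbourhood, and then repair the residual imbalance caused by the unpadded vertices of $N(v^\star)$ with a dedicated balancing gadget, verifying by a direct structural argument that every maximal independent set of $H$ has the same size. Carrying out this padding while preserving both the equivalence ``$v^\star$ shedding $\iff$ $G$ is $\mathbf{W_2}$'' and well-coveredness is where essentially all of the technical effort lies; once it is done, Theorem~\ref{thm:wk} with $k=2$ gives coNP-hardness, and together with the coNP membership above this yields the claimed coNP-completeness.
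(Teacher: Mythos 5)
Your coNP membership argument is correct and matches the paper's. For hardness, however, your proposal has two genuine gaps. The paper does not reduce from $\mathbf{W_2}$-recognition at all: it gives one direct reduction from the complement of \textsc{3-Sat} (a clause clique $K$, a distinguished vertex $v$ adjacent to all of $K$, and a $K_4$ per variable), proves that single graph well-covered by a short explicit case analysis, and shows that $\Phi$ is unsatisfiable iff $v$ is shedding \emph{and} iff the graph is $\mathbf{W_2}$ --- so both Theorem~\ref{thm:shedding} and the $k=2$ case of Theorem~\ref{thm:wk} fall out of the same construction. Your route, reducing from ``is this well-covered graph $\mathbf{W_2}$?'' via the characterization that $\mathbf{W_2}$ means all vertices are shedding, is legitimate in principle but leaves the hard work undone.

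Concretely, the first gap is in your selection gadget: the shedding test for $u_i$ requires the dominating independent set to live in $V(G)\setminus N[u_i]$, i.e.\ to exclude $u_i$ itself, but in your graph $H$ the copy of $u_i$ lies outside $N[v^\star]$ (only the copy of $N(u_i)$ is made adjacent to $v^\star$), so a dominator witnessing that $v^\star$ is not shedding is free to contain $u_i$, which then dominates its own tested neighbourhood trivially. This would make $v^\star$ non-shedding even when every $u_i$ is shedding in $G$, breaking the forward direction of your equivalence; the pendant $u_i^\dagger$ you propose does not forbid $u_i$ from entering the dominator, it only ensures $u_i$ \emph{can} be avoided, which is the wrong direction. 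The second gap is that well-coveredness of $H$ --- the entire point of the theorem, since the hardness must hold inside the class of well-covered graphs --- is deferred to an unspecified ``balancing gadget''; pendants cannot be attached to $N(v^\star)$ without destroying the test, and you give no verifiable argument that all maximal independent sets of $H$ have equal size. Until both points are resolved the reduction does not go through, whereas the paper's direct \textsc{3-Sat} construction handles well-coveredness in a few lines.
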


The proofs of these can be found in Section \ref{section:2}.

A related class of graphs, namely B-graphs, was introduced by Berge \cite{BERGE198231} to study well-covered graphs. It consists of graphs in which every vertex belongs to a maximum independent set (MIS). As recently observed \cite{berge20231}, B-graphs play a key role in Wi-Fi networks: under saturation, the throughput of an access point of a Wi-Fi network is proportional to the number of MIS this access point belongs to in the conflict graph. In particular, if the conflict graph is a B-graphs, it ensures that no access points will have its throughput close to zero.

In \cite{DEAN199467}, the class of B-graphs was generalized to that of \emph{$k$-extendable graphs}; for a positive integer $k$, a graph is $k$-extendable  if every independent set of size (exactly)
$k$ is contained in an MIS. Thus, B-graphs are exactly the $1$-extendable graphs and a graph $G$ is well-covered if and only if it is $k$-extendable for every $k \in \{1, 2, \dots,  |V(G)|\}$. 
This motivates an extension of the $\mathbf{W_k}$ hierarchy via $k$-extendability. Namely, if we denote by $\mathbf{E_s}$ the class of $k$-extendable graphs for all $k \in [s]$, we can infer that $\mathbf{E_{s+1}} \subseteq \mathbf{E_s}$ and $\mathbf{W_1} \subseteq \mathbf{E_s}$.
At the same time, there exist graphs which are $k$-extendable but not $(k-1)$-extendable (for instance, a graph with a universal vertex and different from a clique might be $2$-extendable, but cannot be $1$-extendable), hence 
the class $\mathbf{E_s}$ and the class of $s$-extendable graphs ($s$ fixed) are not identical. 
However, notice that in a $s$-extendable graph, where $s$ is fixed, testing $k$-extendability for $k\leqslant s$ can be done in polynomial time. This is achieved by checking if any independent set of size $k$ is contained in an independent set of size $s$. Dean and Zito~\cite{DEAN199467} showed that in certain classes of graphs (namely: bipartite graphs and perfect graphs with bounded clique number), if the graph is $k$-extendable for ``small values" of $k$ then it is well-covered. 
It is thus natural to ask whether testing well-coveredness becomes easier if the input graph is $\mathbf{E_s}$.  
Unfortunately, we demonstrate that this is not the case, as stated in our next theorem, which we prove in Section \ref{section:3}.

\begin{theorem}
    Let $k \geqslant 2$ and $s\geqslant 1$ and $G$ be $\mathbf{E_s}$. Then deciding whether $G$ is $\mathbf{W_k}$ is coNP-complete. 
\end{theorem}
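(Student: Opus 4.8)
The plan is to obtain this statement as a direct corollary of Theorem~\ref{thm:wk}, by checking that the instances already witnessing hardness there belong to $\mathbf{E_s}$; no new reduction is needed.

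First I would dispose of membership in coNP. Deciding whether a graph is $\mathbf{W_k}$ lies in coNP in general (this is implicit in Theorem~\ref{thm:wk} and in the coNP-completeness result of \cite{feghali2023three}), and restricting the admissible inputs to the subclass $\mathbf{E_s}$ keeps the problem in coNP, since the same verifier for the complement remains correct on the promised instances. Hence the whole content of the statement is the coNP-hardness.

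The decisive observation for hardness is the inclusion chain
\[
\mathbf{W_{k-1}} \subseteq \mathbf{W_1} \subseteq \mathbf{E_s},
\]
valid for every $k \geqslant 2$ and every $s \geqslant 1$. The first inclusion is obtained by iterating the nesting $\mathbf{W_{j+1}} \subseteq \mathbf{W_j}$ down to $j=1$, which requires exactly $k-1 \geqslant 1$, i.e. the hypothesis $k \geqslant 2$. The second inclusion is recorded in the excerpt and holds for every $s$: a well-covered graph extends any independent set to a maximal one, which is therefore maximum, so it is $j$-extendable for all $j$ and in particular lies in $\mathbf{E_s}$. Thus every $\mathbf{W_{k-1}}$ graph is an $\mathbf{E_s}$ graph, uniformly in $s$.

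Given this, I would simply invoke Theorem~\ref{thm:wk}: deciding $\mathbf{W_k}$ is already coNP-hard under the promise that the input is $\mathbf{W_{k-1}}$, and since all such instances satisfy the weaker promise of being $\mathbf{E_s}$, the same family of instances witnesses coNP-hardness of deciding $\mathbf{W_k}$ on $\mathbf{E_s}$ graphs. Together with coNP membership this yields coNP-completeness. The statement is therefore a corollary rather than a fresh reduction, and the only point that genuinely needs care is verifying that $\mathbf{W_{k-1}} \subseteq \mathbf{E_s}$ holds simultaneously for all $s \geqslant 1$, so that the conclusion is uniform in the parameter $s$; this is immediate from the fact that well-coveredness implies $j$-extendability for every $j$.
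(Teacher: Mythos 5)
Your proposal is correct and is essentially identical to the paper's own argument: the paper derives this statement as a one-line corollary of Theorem~\ref{thm:wk}, using exactly the inclusion $\mathbf{W_{k-1}} \subseteq \mathbf{W_1} \subseteq \mathbf{E_s}$ to observe that the hard instances there are already $\mathbf{E_s}$. Your extra care in checking coNP membership under the promise and uniformity in $s$ is fine but adds nothing beyond what the paper implicitly assumes.
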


We should note that, unlike recognizing $\mathbf{W_1}$ graphs (which are in coNP~\cite{CHVATAL1993179,Sankaranarayana}, where a polynomial certificate for \textit{not} being well-covered consists of two maximal independent sets of different sizes), there is no known polynomial certificate for being $1$-extendable, despite recognizing $1$-extendable graphs being NP-hard \cite{berge20231}, even on subcubic planar graphs and on unit disk graphs. Moreover, a close examination of the same reduction also establishes that recognizing $1$-extendable graphs is coNP-hard (see Section \ref{section:3}). So unless NP = coNP, this problem is neither in NP nor in coNP.  In our next result, whose proof is given in Section \ref{section:3}, we determine the precise complexity of this problem. 

\begin{theorem}\label{thm:es}
Let $s \geqslant 1$ and $G$ be $\mathbf{E_{s-1}}$. Then deciding whether $G$ is $\mathbf{E_s}$ is $\Theta_2^p$-complete. 
\end{theorem}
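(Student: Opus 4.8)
The plan is to prove Theorem~\ref{thm:es} in two parts: membership in $\Theta_2^p$, and $\Theta_2^p$-hardness even when restricted to $\mathbf{E_{s-1}}$ graphs.

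For \emph{membership}, I would first observe that deciding $\mathbf{E_s}$ is equivalent to checking, for every independent set $A$ of size exactly $s$ (there are only polynomially many when $s$ is fixed, but in general we should argue more carefully), whether $A$ extends to a maximum independent set. The key quantity is the independence number $\alpha(G)$, which can be pinned down using binary search with $O(\log n)$ calls to an NP oracle of the form ``is there an independent set of size at least $t$?''. Once $\alpha(G)$ is known, a single further NP query of the form ``does there exist an independent set $A$ of size $s$ together with a size-$\alpha(G)$ independent set $S$ with $A \subseteq S$ failing to extend---equivalently, a witness size-$s$ independent set whose maximum extension has size strictly less than $\alpha(G)$?'' would seem to settle the problem. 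The subtlety is that the failure condition ``$A$ does \emph{not} extend to a maximum independent set'' is a $\forall$-statement (no large extension exists), so I would phrase the final oracle query as an NP question asking for the \emph{existence} of a bad $A$, where checking badness of a guessed $A$ requires knowing $\alpha(G)$ and comparing it against the size of the largest independent set containing $A$; guessing $A$ and its claimed extension simultaneously does not directly certify non-extendability. I would therefore structure the computation as: compute $\alpha(G)$ with $O(\log n)$ oracle calls, then compute $\alpha_A(G) := \max\{|S| : S \text{ independent}, A \subseteq S\}$ for the "worst" $A$, or more cleanly ask a single NP query that guesses a size-$s$ independent set $A$ and verifies $A$ has no extension of size $\alpha(G)$; since the latter verification is coNP, I would instead use the known value of $\alpha(G)$ to convert the whole thing into one NP query whose yes-instances certify $G \notin \mathbf{E_s}$, giving a $\mathrm{P}^{\mathrm{NP}[\log]}$ algorithm overall.

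For \emph{hardness}, the natural route is a reduction from a canonical $\Theta_2^p$-complete problem such as \textsc{Odd-Max-Independent-Set} (deciding whether $\alpha(G)$ is odd) or, more convenient here, a problem comparing the independence numbers of two graphs. I would take the known NP-hardness and coNP-hardness gadget for $1$-extendability referenced in the excerpt as the starting building block, since it already encodes ``a vertex lies in \emph{some} maximum independent set'' and its complement, and the fact that the problem is simultaneously NP-hard and coNP-hard strongly suggests the machinery needed to boost to the full $\Theta_2^p$ level. The standard technique is to combine two instances (one forcing an NP-type constraint via the parity/threshold of $\alpha$, one forcing a coNP-type constraint) into a single graph using a disjoint-union-plus-connector gadget so that $s$-extendability of the combined graph holds if and only if the $\Theta_2^p$ predicate is satisfied, while simultaneously ensuring the combined graph is $\mathbf{E_{s-1}}$ by padding each independent set of size $s-1$ with universal-like apex structure so that it trivially extends.

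The main obstacle I anticipate is the restriction to $\mathbf{E_{s-1}}$ graphs: the reduction must produce instances that are guaranteed $(s-1)$-extendable (indeed $\mathbf{E_{s-1}}$) regardless of the answer to the underlying $\Theta_2^p$ instance, so all the ``hardness'' must be concentrated precisely at level $s$. Achieving this requires a careful padding gadget---for instance attaching, to the core reduction graph, auxiliary vertices or cliques whose role is to force every independent set of size at most $s-1$ to extend trivially (by creating many mutually exchangeable maximum independent sets), while leaving the delicate level-$s$ extendability controlled by the $\Theta_2^p$ predicate. Balancing these two requirements---making level $s-1$ unconditionally easy while making level $s$ capture the full $\Theta_2^p$ computation, and simultaneously controlling $\alpha(G)$ so the membership side of the argument and the hardness gadget agree---is the technically delicate heart of the proof, and I would expect the bulk of the work to lie in verifying the $\mathbf{E_{s-1}}$ invariant of the constructed graph.
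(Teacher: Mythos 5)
Your proposal has the right skeleton (binary search for $\alpha(G)$ plus one further oracle call for membership; hardness via comparing two independence numbers), but it contains two genuine gaps.

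First, membership. The final oracle query as you describe it---``one NP query whose yes-instances certify $G \notin \mathbf{E_s}$''---is not an NP query: the statement ``there exists an independent set $A$ of size $s$ with no independent superset of size $\alpha(G)$'' is an $\exists\forall$ statement, and knowing the value of $\alpha(G)$ does not remove the inner universal quantifier. You correctly flag this subtlety, but your proposed fix does not resolve it. The paper resolves it by flipping the polarity: since $s$ is a fixed constant, there are only $O(n^s)$ independent sets of size at most $s$, so the \emph{positive} statement ``every such $A$ extends to an independent set of size at least $r$'' is itself in NP, the certificate being the list of extensions, of total size $O(r\cdot n^s)$ (this is the problem \textsc{Partial $s$-Extendable} of Lemma~\ref{lem:ex1}). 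One then asks this single NP query with $r=\alpha(G)$ after the binary search and accepts iff the oracle answers yes.

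Second, hardness. Your starting point (\textsc{MIS Equality}, i.e., deciding $\alpha(G)=\alpha(H)$) and the ``disjoint-union-plus-connector'' idea do match the paper's base case $s=1$, where one shows that the connector graph $\pi(G,H)$ is $1$-extendable iff $\alpha(G)=\alpha(H)$. But the step you yourself identify as ``the technically delicate heart''---producing instances that are unconditionally $\mathbf{E_{s-1}}$ while concentrating all hardness at level $s$---is precisely the content you have not supplied, and the suggested ``universal-like apex'' padding is suspect (the paper itself notes that a universal vertex destroys $1$-extendability of the remaining vertices). The paper's key missing idea is a self-composition: set $G^+=\pi(G_1,G_2)$ for two copies $G_1,G_2$ of $G$ and prove (Lemma~\ref{lemma:G+}) that $G^+\in\mathbf{E_s}$ if and only if $G\in\mathbf{E_{s-1}}$. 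Applying this to an instance already guaranteed to be $\mathbf{E_{s-2}}$ automatically yields an instance that is $\mathbf{E_{s-1}}$, and induction on $s$ then gives the hardness for every level. Without this lemma or a concrete substitute, your plan does not establish the promise version of the theorem.
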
 

Notice that $\Theta_2^p = \text{P}^{\text{NP}[\log]}$ is the class of problems that can be solved by a polynomial time algorithm using a logarithmic number of calls to a \textsc{Sat} oracle.

\medskip

For tractable graph classes, we note that a characterization of chordal well-covered graphs was established in \cite{Prisner}, stating that \emph{a chordal graph $G$ is well-covered if and only if the simplices of $G$ form a partition of $V(G)$}, which, in turn, leads to a linear-time algorithm for deciding if a chordal graph is well-covered. In \cite{berge20231}, the authors asked whether an analogous characterization for $1$-extendable graphs can be established. This was recently settled in~\cite{DetHeTop23} using a new object called \textit{successive clique covers}. Unfortunately, this characterization does not yield a linear-time algorithm. In our next theorem, whose proof is in Section \ref{sec:chordal}, we simplify their characterization, and use it to describe a linear-time algorithm. 

\begin{theorem}\label{thm:chordalmain}
A chordal graph $G$ is $1$-extendable if, and only if, there exists a partition of $V(G)$ into $\alpha(G)$ parts such that each part is a maximal clique of $G$. This can be tested in linear time.
\end{theorem}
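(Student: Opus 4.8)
The plan is to establish the equivalence by induction on $|V(G)|$, in both directions by peeling off the closed neighborhood $N[u]$ of a simplicial vertex $u$. I will freely use three standard facts about a chordal graph $G$: it has a simplicial vertex; if $u$ is simplicial then $N[u]$ is a maximal clique; and if $G$ is not complete then it has two non-adjacent simplicial vertices. I will also use that chordal graphs are perfect, so the clique cover number equals $\alpha(G)$; consequently any partition of $V(G)$ into $\alpha(G)$ cliques is a minimum clique cover, and every maximum independent set is a transversal meeting each part in exactly one vertex. Two further observations recur: removing one part of such a partition decreases $\alpha$ by exactly one, and if $u$ is simplicial then any maximal clique containing $u$ equals $N[u]$ (such a clique lies inside $N[u]$, which is itself a clique).

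For the direction ($\Leftarrow$), assume $G$ is chordal with a partition $\mathcal{C}$ of $V(G)$ into $\alpha(G)$ maximal cliques, and fix a vertex $v$ for which I must produce a maximum independent set. If $G$ is complete this is immediate. Otherwise I fix two non-adjacent simplicial vertices; by the observation above, each of their closed neighborhoods is forced to be a part of $\mathcal{C}$, and these two parts are distinct, so one of them, say $N[u]$, does not contain $v$. Deleting $N[u]$ yields a chordal graph whose remaining parts are still maximal (deleting vertices cannot destroy maximality) and partition it into $\alpha(G)-1 = \alpha(G-N[u])$ maximal cliques while retaining $v$. By induction there is a maximum independent set $T$ of $G-N[u]$ containing $v$, and since $u$ has no neighbor outside $N[u]$, the set $T\cup\{u\}$ is a maximum independent set of $G$ containing $v$.

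For the direction ($\Rightarrow$), assume $G$ is chordal and $1$-extendable, pick a simplicial $u$, and set $C=N[u]$. First I show $G-C$ is again $1$-extendable: since $u$ is simplicial, $\alpha(G-C)=\alpha(G)-1$, so any maximum independent set of $G$ through a vertex $w\notin C$ must meet the clique $C$ in exactly one vertex, and deleting that vertex leaves a maximum independent set of $G-C$ through $w$. By induction $G-C$ has a partition into $\alpha(G)-1$ maximal cliques $C_1,\dots,C_{\alpha-1}$, and together with $C$ these partition $V(G)$ into $\alpha(G)$ cliques. The crux, which I expect to be the main obstacle, is that a clique $C_j$ maximal in $G-C$ need not remain maximal in $G$, since some vertex of $C$ might be adjacent to all of $C_j$. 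This is exactly where $1$-extendability enters: because $C,C_1,\dots,C_{\alpha-1}$ is a minimum clique cover, every maximum independent set is a transversal of it, so if some $y\in C$ were adjacent to all of $C_j$, then in any maximum independent set containing $y$ (one exists by $1$-extendability) $y$ would be the representative of $C$ yet adjacent to the representative of $C_j$, a contradiction. Hence no vertex of $C$ dominates any $C_j$, each $C_j$ stays maximal in $G$, and together with the maximal clique $C$ this gives the desired partition.

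Finally, for the linear-time test I will compute a perfect elimination ordering in linear time and peel along it: repeatedly take the earliest surviving vertex $v_i$, which is simplicial in the current induced subgraph, and remove $N[v_i]$ intersected with the surviving vertices. Each peel removes the closed neighborhood of a simplicial vertex, hence decreases $\alpha$ by one, so the procedure always outputs a clique partition of size exactly $\alpha(G)$. By the two directions above, $G$ is $1$-extendable if and only if every clique so produced is maximal in $G$, and by the forward argument this holds irrespective of the order of peeling; it therefore suffices to verify that each peeled clique is maximal in $G$, which amounts to checking that no previously removed vertex is adjacent to all of it. Using a clique tree of $G$ this can be organized in total linear time. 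The bulk of the work lies in the characterization, whereas the algorithmic part is routine chordal-graph bookkeeping.
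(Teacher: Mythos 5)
Your proof is correct, and its overall strategy matches the paper's: peel off the closed neighborhood of a simplicial vertex, use the fact that a partition into $\alpha(G)$ cliques forces every maximum independent set to be a transversal of it (which is exactly how $1$-extendability rules out a vertex of the peeled clique dominating a remaining part), and then run this peeling along a perfect elimination ordering, checking maximality of each produced clique. The one genuine difference is in the $(\Leftarrow)$ direction: the paper roots a clique tree, peels a leaf clique $C_1$ and argues via its parent $P$ that every vertex of $C_1$ lies in a maximum independent set, whereas you invoke Dirac's theorem that a non-complete chordal graph has two non-adjacent simplicial vertices, observe that the part containing a simplicial vertex $u$ must equal $N[u]$, and peel whichever of the two forced parts avoids the target vertex. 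Your variant is self-contained and arguably cleaner, avoiding the clique-tree bookkeeping in that direction; you also prove, rather than merely use, that $G-N[u]$ stays $1$-extendable, which the paper isolates as a separate lemma. Both treatments are equally terse about implementing the final maximality test in $O(n+m)$, which is acceptable since it is routine counting over the edge set.
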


Finally, in Section \ref{sec:conclusion}, we conclude the paper with some old and new open problems.

\section{Recognizing $\mathbf{W_k}$ graphs}\label{section:2}

We start by proving simultaneously Theorem \ref{thm:shedding} and the  case $k=2$ of Theorem \ref{thm:wk}, thereby resolving both Problems \ref{problem1} and \ref{problem2}.

\begin{figure}[h!]
\centering
\begin{tikzpicture}
\node[draw, circle, minimum size = 0.7cm] (K) at (-4,4.5) {$v$};

\node[draw, circle, minimum size = 0.7cm] (c1) at (-2,6) { \small $v_1$};
\node[draw, circle, minimum size = 0.7cm] (c2) at (-2,5) { \small $v_2$};
\node[draw, circle, minimum size = 0.7cm] (cm) at (-2,3) {\small $v_m$};
\node[draw, ellipse, minimum height = 5cm, minimum width = 1.6cm] (C) at (-2,4.5) {} ;

\node[draw, circle, minimum size = 0.7cm] (x11) at (1,9) {};
\node[draw, circle, minimum size = 0.7cm] (x12) at (2,9) {};
\node[draw, ellipse, minimum height = 1.2cm, minimum width = 2.5cm] (X1) at (1.5,9) {} ;
\node[minimum size = 0.7cm] () at (3.2,9) { $x_1$};

\node[draw, circle, minimum size = 0.7cm] (nx11) at (1,7.5) {};
\node[draw, circle, minimum size = 0.7cm] (nx12) at (2,7.5) {};
\node[draw, ellipse, minimum height = 1.2cm, minimum width = 2.5cm] (NX1) at (1.5,7.5) {} ;
\node[minimum size = 0.7cm] () at (3.2,7.5) { $\overline{x_1}$};

\node[draw, circle, minimum size = 0.7cm] (x21) at (1,5.5) {};
\node[draw, circle, minimum size = 0.7cm] (x22) at (2,5.5) {};
\node[draw, ellipse, minimum height = 1.2cm, minimum width = 2.5cm] (X2) at (1.5,5.5) {} ;
\node[minimum size = 0.7cm] () at (3.2,5.5) { $x_2$};

\node[draw, circle, minimum size = 0.7cm] (nx21) at (1,4) {};
\node[draw, circle, minimum size = 0.7cm] (nx22) at (2,4) {};
\node[draw, ellipse, minimum height = 1.2cm, minimum width = 2.5cm] (NX2) at (1.5,4) {} ;
\node[minimum size = 0.7cm] () at (3.2,4) { $\overline{x_2}$};

\node[draw, circle, minimum size = 0.7cm] (xn1) at (1,1.5) {};
\node[draw, circle, minimum size = 0.7cm] (xn2) at (2,1.5) {};
\node[draw, ellipse, minimum height = 1.2cm, minimum width = 2.5cm] (Xn) at (1.5,1.5) {} ;
\node[minimum size = 0.7cm] () at (3.2,1.5) { $x_n$};

\node[draw, circle, minimum size = 0.7cm] (nxn1) at (1,0) {};
\node[draw, circle, minimum size = 0.7cm] (nxn2) at (2,0) {};
\node[draw, ellipse, minimum height = 1.2cm, minimum width = 2.5cm] (NXn) at (1.5,0) {} ;
\node[minimum size = 0.7cm] () at (3.2,0) { $\overline{x_n}$};

\draw[fill = black] (1.5,2.6) circle (0.02);
\draw[fill = black] (1.5,2.8) circle (0.02);
\draw[fill = black] (1.5,3) circle (0.02);

\draw[fill = black] (-2,3.8) circle (0.02);
\draw[fill = black] (-2,4) circle (0.02);
\draw[fill = black] (-2,4.2) circle (0.02);

\draw(K) -- (c1) ;
\draw(K) -- (c2) ;
\draw(K) -- (cm) ;

\draw (x11) -- (x12) ;
\draw (nx11) -- (nx12) ;
\draw (x11) -- (nx12) ;
\draw (x11) -- (nx11) ;
\draw (x12) -- (nx11) ;
\draw (x12) -- (nx12) ;

\draw (x21) -- (x22) ;
\draw (nx21) -- (nx22) ;
\draw (x21) -- (nx22) ;
\draw (x21) -- (nx21) ;
\draw (x22) -- (nx21) ;
\draw (x22) -- (nx22) ;

\draw (xn1) -- (xn2) ;
\draw (nxn1) -- (nxn2) ;
\draw (xn1) -- (nxn2) ;
\draw (xn1) -- (nxn1) ;
\draw (xn2) -- (nxn1) ;
\draw (xn2) -- (nxn2) ;

\draw (c1) to[out = 45, in = 180] (X1) ;
\draw (c1) to[out = 0, in = 180] (NX2) ;
\draw (c1) to[out = -45, in = 180]  (Xn) ;
\draw (c2) to[out = 45, in = 180]  (NX1) ;
\draw (c2) to[out = 0, in = 180]  (X2) ;
\draw (cm) to[out = -45, in = 180]  (NXn) ;

\end{tikzpicture}
\caption{The graph $G'$.} \label{fig:W2W1}
\end{figure}
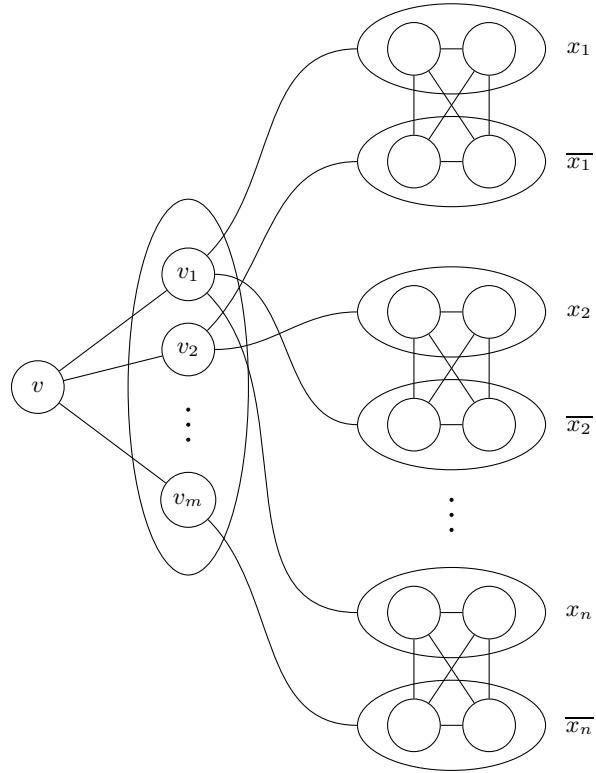

\begin{theorem}\label{thm:wcpartial}
Let $G$ be a well-covered graph and $v\in V(G)$.  Then the problems of deciding if $G$ is $\mathbf{W_2}$ and if $v$ is shedding are both coNP-complete.
\end{theorem}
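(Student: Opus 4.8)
The plan is to prove coNP membership and coNP-hardness separately, handling both problems through the single construction of Figure~\ref{fig:W2W1}. For membership, observe that non-shedding of a vertex $u$ is certified in polynomial time by exhibiting an independent set of $V(G)\setminus N[u]$ that dominates $N(u)$, so deciding shedding lies in coNP. For $\mathbf{W_2}$, I would invoke the characterization that a graph without isolated vertices is $\mathbf{W_2}$ if and only if all of its vertices are shedding~\cite{levit2017w2}: a polynomial certificate that $G$ is \emph{not} $\mathbf{W_2}$ then consists of one non-shedding vertex together with its dominating independent set, while isolated vertices (which can be detected directly and immediately preclude $\mathbf{W_2}$ for $k=2$) are disposed of separately. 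Hence both problems lie in coNP.

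For hardness I would reduce from the coNP-complete problem of deciding whether a CNF formula $\phi$ on variables $x_1,\dots,x_n$ and clauses $C_1,\dots,C_m$ is unsatisfiable, building the graph $G'$ of Figure~\ref{fig:W2W1}. Here $v$ is a central vertex whose neighbourhood is a \emph{clique} of clause vertices $v_1,\dots,v_m$; each variable $x_i$ is a copy of $K_4$ whose four vertices form two twin pairs, one pair labelled $x_i$ and the other $\overline{x_i}$; and each clause vertex $v_j$ is joined to both vertices of the ellipse of every literal it contains. The first task is to check that $G'$ has no isolated vertices and, crucially, that it is well-covered with $\alpha(G')=n+1$: any maximal independent set either contains $v$ together with exactly one vertex from each $K_4$, or, since the clause vertices form a clique, contains exactly one clause vertex together with one free vertex from each variable gadget.

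The heart of the argument is to determine exactly which vertices are shedding. Each literal vertex shares its closed neighbourhood with its twin in the same ellipse; that twin has no neighbour outside this closed neighbourhood, so $N(\cdot)$ of the literal vertex can never be dominated from outside it, and every literal vertex is shedding regardless of $\phi$. Each clause vertex $v_j$ is shedding for a complementary reason: dominating $v$ would require some other clause vertex, but all clause vertices lie in $N[v_j]$ because they form a clique, so $N(v_j)$ cannot be dominated from $V(G')\setminus N[v_j]$. Finally, an independent set of $V(G')\setminus N[v]$ (which necessarily consists of literal vertices, at most one per $K_4$) dominating every clause vertex corresponds exactly to a satisfying assignment of $\phi$, choosing the $x_i$- or $\overline{x_i}$-ellipse according to the truth value; thus $v$ is shedding if and only if $\phi$ is unsatisfiable.

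Combining these facts closes the proof: deciding shedding is coNP-hard since $v$ is shedding iff $\phi$ is unsatisfiable, and deciding $\mathbf{W_2}$ is coNP-hard since, all non-$v$ vertices being shedding unconditionally, $G'$ is $\mathbf{W_2}$ iff $v$ is shedding iff $\phi$ is unsatisfiable. I expect the main obstacle to be the well-coveredness verification combined with the \emph{uniform} shedding of all non-$v$ vertices: the construction must force $v$ to be the unique vertex whose shedding status depends on $\phi$, which is precisely what the twin pairs (for literal vertices) and the clause clique (for clause vertices) are engineered to guarantee.
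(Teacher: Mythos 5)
Your proposal is correct, and for the $\mathbf{W_2}$ part it takes a genuinely different route from the paper. You use the same reduction graph and the same argument that $v$ is shedding iff $\Phi$ is unsatisfiable (which matches the paper's treatment of the shedding problem), but you then deduce the $\mathbf{W_2}$ equivalence from the Levit--Tankus characterization (``a graph without isolated vertices is $\mathbf{W_2}$ iff all its vertices are shedding'') by observing that every vertex other than $v$ is \emph{unconditionally} shedding: each literal vertex because it has a true twin inside its $K_4$ (so its twin, lying in its open neighbourhood, cannot be dominated from outside the closed neighbourhood), and each clause vertex $v_j$ because $v\in N(v_j)$ has all its neighbours inside the clause clique, hence inside $N[v_j]$. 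This collapses both hardness claims to the single equivalence for $v$, and it also gives you coNP membership of $\mathbf{W_2}$ directly (a non-shedding vertex plus its witnessing independent set, after discarding isolated vertices). The paper instead proves the $\mathbf{W_2}$ equivalence from first principles in both directions: for unsatisfiability $\Rightarrow \mathbf{W_2}$ it explicitly extends an arbitrary pair of disjoint independent sets $S_1,S_2$ to disjoint maximum independent sets (first grabbing a vertex of $V(K)\cup\{v\}$ for each, then distinct vertices of each $K^i$), and for the converse it applies the $\mathbf{W_2}$ property to $S_1=\{\ell_i^1\}_i$, $S_2=\{v\}$. Your approach is shorter and more unified but leans on the external characterization; the paper's is self-contained and is the version that generalizes directly to the $\mathbf{W_k}$ proof of Theorem~\ref{thm:wk}, where the disjoint-extension argument is reused with the clique $U$ in place of $v$. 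Both arguments are sound; just make sure to state explicitly that $G'$ has no isolated vertices (i.e.\ $m\geq 1$ and $n\geq 1$) before invoking the characterization, and note that your verification of well-coveredness, though only sketched, is exactly the case analysis the paper carries out.
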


\begin{proof}
From \cite{feghali2023three}, recognizing $\mathbf{W_2}$ graphs is in coNP. And deciding if a vertex is shedding is also in coNP, where a certificate consists of an independent set $S$ of $V (G) \setminus N [v]$ that dominates $N(v)$. 

To show the hardness for both problems, we describe a generic reduction from the complement of \textsc{3-Sat}.  Let $\Phi$ be a 3-CNF formula with a set $C$ of $m$ clauses $c_1,...,c_m$ and a set $X$ of $n$ variables $x_1,...,x_n$. Consider the graph $G$ (with $m+1+4n$ vertices) that is obtained from $\Phi$ as follows:
\begin{itemize}
\item create a clique $K$ of size $m$ with vertex labels $v_1, \dots, v_m$;
\item add a new vertex $v$ adjacent to all vertices of $K$;
\item for each variable $x_i$, create vertices $x_i^1$, $x_i^2$, $\overline{x_i}^1$, $\overline{x_i}^2$, and add all (six) possible edges between them;
\item if $x_i$ (resp. $\overline{x_i}$) appears in clause $c_j$, add the edges $x_i^1v_j$ and $x_i^2v_j$ (resp. $\overline{x_i}^1v_j$ and $\overline{x_i}^2v_j$).
\end{itemize}
That completes the construction of $G$; see also Figure \ref{fig:W2W1} for an illustration. 

We must show that $G$ is well-covered and that $\Phi$ is unsatisfiable if, and only if, $G$ is $\mathbf{W_2}$. For $1\leq i \leq n$, write $K^i = \{\ell^b\}_{\ell \in \{x_i, \overline{x_i}\}, b \in \{1,2\} }$ and note that $K^i$ is, by construction, isomorphic to the complete graph $K_4$ on four vertices. 

\begin{claim}
$G$ is well-covered.
\end{claim}

\begin{proof}
 Any independent set of $G$ can contain at most vertex from $\{v\} \cup K$ and at most one vertex from each $K^i$. Thus, $\alpha(G) \leqslant n+1$. However, since we can construct an independent set of size $n+1$ by taking $\{v\} \cup \{x_i^1 \}_{1\leqslant i \leqslant n}$, we have that $\alpha(G) = n+1$. So, to prove the claim, if we let $S$ be any maximal independent set of $G$, we must show that $S$ has size $n+1$.

If $S$ contains $v$, then $S-\{v\}$ is a maximal independent set of $G- (K\cup \{v\})$, which is exactly a disjoint union of $n$ copies of $K_4$. Thus, $S$ has size $n+1$. So we may assume that $S$ does not contain $v$. In this case, it must contain, by maximality and construction, a vertex $v_j \in V(K)$ for some $j \in [m]$. Consider the literals $\ell_1,\ell_2,\ell_3$ appearing in $c_j$. Then $S-\{v_j\}$ is maximal in $G-(K \cup \{v\} \cup \{\ell_i^b\}_{i \in \{1,2,3\}, b \in \{1,2\}})$, and, as it consists of a disjoint union of $n-3$ copies of $K_4$ and three copies of $K_2$,  $S$ has again size $n+1$, as required.  
\end{proof}

We first show that $\Phi$ is unsatisfiable if, and only if, $G$ is $\mathbf{W_2}$. 

First, suppose that $G$ is $\mathbf{W_2}$. For any truth assignment to $X$, for $1\leqslant i \leqslant n$, define a new variable $\ell_i$ by $\ell_i = x_i$ if $x_i$ is set to true, and by $\ell_i = \overline{x_i}$ otherwise. Let $S_1 = \{\ell_i^1\}_{1\leqslant i \leqslant n}$ and $S_2 = \{v\}$ and note that these form independent sets in $G$. Now, since $G\in \mathbf{W_2}$, $S_1$ is contained in an MIS $I$ of $G$ and, by our choice of $S_2$, $I = S_1 \cup \{v_j\}$ for some $j \in [m]$. Thus, by the definition of $S_1$, $v_j$ is not adjacent to any vertex that corresponds to a literal that is set to true, and hence the clause $c_j$ is not satisfied by the assignment. Thus, for every truth assignment there is an unsatisfied clause, which means that $\Phi$ is unsatisfiable, as needed.

Conversely, suppose that $\Phi$ is unsatisfiable, and let $S_1$ and $S_2$ be disjoint independent sets of $G$. We show how to extend both of them to disjoint independent sets of size $n+1$.
The first step is to add a vertex from $V(K) \cup \{v\}$. For each $r \in \{1, 2\}$, if $S_r\cap V(K) = \emptyset$, there exists a vertex $v_{i_r}$ of $K$ such that $v_{i_r}$ is not adjacent to any vertex of $S_r$, since otherwise, by setting a literal to true whenever its corresponding vertex is a member of $S_r$, we can readily infer, by construction, that $\Phi$ is satisfied, which is a contradiction. Hence, if $S_1 \cap V(K) = \emptyset$, we can add $v_{i_1}$ to $S_1$ unless $v_{i_1} \in S_2$, in which case we add $v$ to $S_1$. The same argument holds for $S_2$.
So  we can assume that both $S_1$ and $S_2$ contain a vertex from $V(K) \cup \{v\}$. For each $i \in [n]$ and $r \in \{1, 2\}$ such that $S_r$ does not contain any vertex from $K^i$, observe that there are always at least two vertices from $K^i$ which are not adjacent to any vertex of $S_r$, hence we can add vertices so that both $S_1$ and $S_2$ contain a different vertex from each $K^i$.
Eventually, the resulting sets $S_1$ and $S_2$ are disjoint, independent and, by construction, of size $n+1$. This completes the hardness of deciding if $G$ is $\mathbf{W_2}$.

It remains to show that $\Phi$ is unsatisfiable if, and only if, $v$ is shedding. 

If $\Phi$ is satisfiable by some truth assignment, define a new variable $\ell_i = x_i$ if $x_i$ is set to true and $\ell_i = \overline{x_i}$ otherwise. Consider the independent sets $S = \{\ell_i^1\}_{1\leqslant i \leqslant n}$. Since the truth assignment satisfies $\Phi$, $S$ dominates $V(K) =N(v)$, and thus $v$ is not shedding.

Conversely, if $v$ is not shedding, there exists an independent set $S$ of $V(G) \setminus N[v]$ that dominates $N(v)=V(K)$. Up to the addition of some vertices, we assume that $S$ is maximal in $V(G) \setminus N[v]$, and thus contains exactly one vertex per $K^i$ for any $1\leqslant i \leqslant n$. Consider the truth assignment $\mu$ on $X$ defined by $\mu(\ell)=1$ if, and only if, $\ell^1\in S$ or $\ell^2 \in S$ for some literal $\ell$. Since any clause vertex is dominated by $S$, $\mu$ satisfies $\Phi$. This completes the proof.  
\end{proof}

\begin{proof}[Proof of Theorem \ref{thm:wk}]
From \cite{feghali2023three}, recognizing $\mathbf{W_k}$ graphs is in coNP.

The proof of hardness is essentially identical to that of Theorem \ref{thm:wcpartial}, but we include the details for completeness. We reduce from the complement of \textsc{3-Sat}.  Let $\Phi$ be a 3-CNF formula with a set $C$ of $m$ clauses $c_1,...,c_m$ and a set $X$ of $n$ variables $x_1,...,x_n$. Consider the graph $G$  that is obtained from $\Phi$ as follows:
\begin{itemize}
\item create a clique $K$ of size $m$ with vertex labels $v_1, \dots, v_m$;
\item create a clique $U$ of size $k-1$ with vertex labels $u_1, \dots, u_{k-1}$ and add an edge between each vertex of $U$ and each vertex of $K$;
\item for each variable $x_i$, add the vertices $x_i^1,\cdots x_i^k$ and $\overline{x_i}^1,\cdots \overline{x_i}^k$, and all possible edges between them;
\item if $x_i$ (resp. $\overline{x_i}$) appears in clause $c_j$, add the edges $(x_i^b ,v_j)$ (resp. $(\overline{x_i}^b ,v_j)$) for each $b \in [k]$.
\end{itemize}
For $1\leq i \leq n$, write $K^i = \{\ell^b\}_{\ell \in \{x_i, \overline{x_i}\}, b \in [k] }$ and note that $K^i$ is, by construction, isomorphic to the complete graph $K_{2k}$. 

\begin{claim}
$G$ is $\mathbf{W_{k-1}}$.
\end{claim}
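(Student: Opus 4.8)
The plan is to show that $G \in \mathbf{W_{k-1}}$ \emph{unconditionally}, i.e.\ regardless of whether $\Phi$ is satisfiable; the clique $U$ of size $k-1$ is precisely the gadget that makes this possible. First I would record the global structure: the sets $K \cup U, K^1, \ldots, K^n$ form a partition of $V(G)$ into $n+1$ cliques (note that $K \cup U$ is a clique since every vertex of $U$ is adjacent to every vertex of $K$). Hence $\alpha(G) \leqslant n+1$, with equality because $\{u_1, x_1^1, \ldots, x_n^1\}$ is independent. Consequently every maximum independent set contains exactly one vertex of $K \cup U$ and exactly one vertex of each $K^i$. As is standard for reductions from \textsc{3-Sat}, I would assume that no clause contains two literals of the same variable, so that each clause vertex $v_j$ is adjacent to at most $k$ vertices of any fixed $K^i$ (the $k$ copies of a single literal of $x_i$).

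Given pairwise disjoint independent sets $A_1, \ldots, A_{k-1}$, the goal is to extend each $A_r$ to a maximum independent set $S_r$ while keeping the $S_r$ pairwise disjoint; by the previous paragraph this amounts to choosing, for each $r$, one representative in $K \cup U$ and one in each $K^i$, all choices distinct across $r$ and compatible (non-adjacent) with the vertices already chosen for the same $S_r$. I would first assign the representatives in $K \cup U$: if some $A_r$ already contains a vertex of $K\cup U$ (at most one, as $K\cup U$ is a clique), keep it; for every remaining $r$, assign a fresh vertex of $U$. The key counting point is that $U$ has exactly $k-1$ vertices, so writing $a$ for the number of $A_r$ whose $K\cup U$-vertex lies in $U$, the number of still-available vertices of $U$ equals $(k-1)-a$, which is at least the number of sets still needing one; moreover any vertex of $U$ is non-adjacent to all of $\bigcup_i K^i$, hence compatible with whatever these sets already contain.

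Next, for each fixed $i$ I would assign the representatives inside $K^i$ greedily. For the sets $A_r$ that already meet $K^i$ the representative is forced, and these are automatically distinct (by disjointness) and compatible. For each remaining set I must pick a vertex of $K^i$ that is (i) distinct from all previously used vertices of $K^i$, and (ii) non-adjacent to the $K\cup U$-representative of $S_r$: condition (ii) is vacuous when that representative lies in $U$, and otherwise forbids at most $k$ vertices of $K^i$ (the copies of one literal of $x_i$). Since $|K^i| = 2k$ and at most $k-2$ vertices of $K^i$ are used by the other $k-2$ sets, the number of forbidden vertices when placing $S_r$ is at most $k + (k-2) = 2k-2 < 2k$, so a valid choice always exists. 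Finally I would verify that each $S_r$ is independent: distinct $K^i$'s have no edges between them, a $U$-representative is non-adjacent to every $K^i$, and a clause-vertex representative was explicitly kept non-adjacent to its $K^i$-representatives. The $S_r$ are pairwise disjoint by construction and each has size $n+1$, so each is a maximum independent set containing $A_r$, proving $G \in \mathbf{W_{k-1}}$.

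I expect the only genuinely delicate point to be the simultaneous feasibility of the choices inside each $K^i$, where the interaction between a set that has selected a clause vertex and the literal-copies it forbids in $K^i$ must be controlled; the inequality $2k > 2k-2$ shows that the gadget sizes ($|U| = k-1$ and $|K^i| = 2k$) leave exactly enough slack, and it is worth double-checking the degenerate cases (some $A_r$ empty, or $A_r$ meeting $K$ but not a given $K^i$) to confirm that the greedy assignment never gets stuck.
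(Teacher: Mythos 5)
Your proposal is correct and follows essentially the same route as the paper: first use the $k-1$ vertices of $U$ (which see no $K^i$) to give every $A_r$ a representative in $K\cup U$, then extend inside each $K^i$ using the fact that a clause vertex forbids at most $k$ of the $2k$ vertices of $K^i$ (assuming no clause contains a literal and its negation). Your write-up is in fact slightly more careful than the paper's on the simultaneous-disjointness counting ($2k-2<2k$), which the paper leaves implicit.
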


\begin{proof}
As before, $\alpha(G) = n+1$. 
Now, let $A_1, \dots, A_{k-1}$ be disjoint independents sets of $G$. We must show that there exists pairwise vertex disjoint independents sets $S_1, \dots, S_{k-1}$ each of size $n+1$ such that  $A_i \subseteq S_i$ for $i \in [k-1]$. 

Since $U$ is of size $k-1$ and is not adjacent to any vertex of any $K^i$, we can extend each $A_r$ to make sure that $|A_r \cap (U \cup K)| = 1$ for every $r=1, \dots, k-1$. Now, for every $r \in [k-1]$, if $A_r \cap K = \emptyset$, then since each $K^i$ consists of $2k$ vertices, one can extend each $A_r$ so that $|A_r \cap K^i| = 1$. If $A_r \cap K = \{v_j\}$ for some $j \in [m]$, then since we can assume that each clause does not contain a literal and its negation, there must be $k$ vertices of each $K^i$ with no neighbor in $A_r$, hence we can extend $A_i$ to an independent set of size $n+1$.
\end{proof}

We now show that $\Phi$ is unsatisfiable if, and only if, $G$ is $\mathbf{W_k}$. 
First, suppose that $G$ is $\mathbf{W_k}$. For any truth assignment to $X$, for $1\leqslant i \leqslant n$, define a new variable $\ell_i$ by $\ell_i = x_i$ if $x_i$ is set to true, and by $\ell_i = \overline{x_i}$ otherwise. Let $S_r = \{u_r\}$ for $r \in \{1, \dots, k-1\}$ and $S_k = \{\ell_i^1\}_{1\leqslant i \leqslant n}$. Clearly, $S_1, \dots, S_{k-1}$ are pairwise disjoint independent sets in $G$. Since, by assumption, $G\in \mathbf{W_k}$, each $S_k$ is contained in an MIS $I_k$ of $G$. By our choice of the $S_j$'s, $I_k= S_k \cup \{v_j\}$ for some $j \in [m]$. Thus, by the definition of $S_k$, $v_j$ is not adjacent to any vertex that corresponds to a literal that is set to true, and hence the clause $c_j$ is not satisfied by the assignment. Thus, for every truth assignment there is an unsatisfied clause, which means that $\Phi$ is unsatisfiable, as needed.

Conversely, suppose that $\Phi$ is unsatisfiable, and let $S_1,\dots, S_k$ be disjoint independent sets of $G$. We show that there exists pairwise disjoint independents sets $T_1, \dots, T_k$ each of size $n+1$ such that  $S_i \subseteq T_i$ for $i \in [k]$. 

The first step is to add a vertex from $V(K) \cup V(U)$. Specifically, for $r \in \{1, \cdots, k\}$, if $S_r\cap V(K) = \emptyset$, then there exists a vertex $v_{i_r}$ of $K$ such that $v_{i_r}$ is not adjacent to any vertex of $S_r$, since otherwise, by setting a literal to true whenever its corresponding vertex is a member of $S_r$, we can readily infer, by construction, that $\Phi$ is satisfied, which is a contradiction. Hence, if $S_1 \cap V(K) = \emptyset$, we can add $v_{i_1}$ to $S_1$ unless $v_{i_1} \in \bigcup_{r>1} S_r$, in which case there exists $u_{j_1}\in V(U)$ such that $u_{j_1} \notin \bigcup_{r>1} S_r$. The same argument holds for $S_r$ for $r\in \{2,\cdots, k\}$.
In the following, we may assume that all $S_r$ contain a vertex from $V(K) \cup V(U)$. For each $i \in [n]$ and $r \in \{1, \cdots,k\}$ such that $S_r$ does not contain any vertex from $K^i$, observe that there are always at least two vertices from $K^i$ which are not adjacent to any vertex of $S_r$, hence we can add vertices so that all $S_r$ contain a different vertex from each $K^i$, as desired.
\end{proof}

Since $\mathbf{W_k} \subseteq \mathbf{E_s}$ for any $s, k \geqslant 1$, we have the following:

\begin{corollary}\label{cor:ex}
    Let $k\geqslant 2$ and $s\geqslant 1$ and $G$ be $\mathbf{E_s}$. Then deciding if $G$ is $\mathbf{W_k}$ is coNP-complete. 
\end{corollary}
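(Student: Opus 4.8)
The plan is to deduce this statement directly from Theorem \ref{thm:wk} by verifying that the graph produced in that theorem's hardness reduction already belongs to the target class $\mathbf{E_s}$, so that no new construction is needed.

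For membership, I would simply note that recognizing $\mathbf{W_k}$ graphs is in coNP by \cite{feghali2023three}; restricting the admissible inputs to $\mathbf{E_s}$ graphs cannot push the problem outside coNP, so the restricted decision problem remains in coNP as well.

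For hardness, the key point is a chain of class inclusions. For every $k \geqslant 2$ and $s \geqslant 1$ we have $\mathbf{W_{k-1}} \subseteq \mathbf{W_1}$, since the hierarchy is nested ($\mathbf{W_{j+1}} \subseteq \mathbf{W_j}$ for all $j \geqslant 1$), and $\mathbf{W_1} \subseteq \mathbf{E_s}$ because a well-covered graph is $k$-extendable for every $k$, as recorded in the introduction. Composing these gives $\mathbf{W_{k-1}} \subseteq \mathbf{E_s}$. Now the graph $G$ built in the proof of Theorem \ref{thm:wk} was shown there to be $\mathbf{W_{k-1}}$; by the inclusion just stated, this very same $G$ is therefore $\mathbf{E_s}$ for every $s \geqslant 1$. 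Since the reduction of Theorem \ref{thm:wk} already certifies that deciding whether $G$ is $\mathbf{W_k}$ is coNP-hard, and since each such instance $G$ simultaneously satisfies the promise of being $\mathbf{E_s}$, the identical reduction establishes coNP-hardness of deciding $\mathbf{W_k}$ when the input is restricted to $\mathbf{E_s}$ graphs.

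As this is a corollary of an already completed reduction, I expect essentially no genuine obstacle: the only care required is to make the inclusion $\mathbf{W_{k-1}} \subseteq \mathbf{E_s}$ explicit and to confirm it applies to the concrete graph $G$ from Theorem \ref{thm:wk}. The one edge case worth a sentence is $k = 2$, where $\mathbf{W_{k-1}} = \mathbf{W_1}$ is precisely well-coveredness, which is exactly the property verified in the corresponding claim, so the argument specializes cleanly.
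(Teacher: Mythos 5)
Your proposal is correct and matches the paper's own (one-line) derivation: the paper likewise obtains the corollary from Theorem~\ref{thm:wk} via the inclusion $\mathbf{W_{k-1}}\subseteq\mathbf{W_1}\subseteq\mathbf{E_s}$, so the hard instances of that reduction already satisfy the $\mathbf{E_s}$ promise. No gap.
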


We can also address the missing case in the statement of Corollary \ref{cor:ex} (with $k$ replaced by $1$), which requires a different proof.

\begin{theorem}\label{thm:eswc}
    Let $s \geqslant 1$ and $G$ be $\mathbf{E_s}$. Then deciding if $G$ is well-covered is coNP-complete. 
\end{theorem}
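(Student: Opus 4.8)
The plan is to establish coNP membership exactly as for the unrestricted problem: a graph fails to be well-covered precisely when it has two maximal independent sets of different sizes, and such a pair is a polynomial certificate (the same certificate recalled for $\mathbf{W_1}$ earlier in the paper). For hardness I would again reduce from the complement of \textsc{3-Sat}, but now I cannot reuse the gadgets of Theorem~\ref{thm:wcpartial} or Theorem~\ref{thm:wk}, since those produce graphs that are \emph{unconditionally} well-covered and would trivialise the question; here well-coveredness must instead encode unsatisfiability, while the graph must be forced into $\mathbf{E_s}$ no matter what the formula is.

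The construction I propose is the most naive SAT-gadget, padded by a fixed number of dummy clauses. Given a 3-CNF formula $\Phi$ on variables $x_1,\dots,x_n$ and clauses $c_1,\dots,c_m$, introduce $s+1$ fresh variables $y_1,\dots,y_{s+1}$ together with the $s+1$ unit clauses $(y_1),\dots,(y_{s+1})$, and call the resulting formula $\Phi'$; note $\Phi'$ is satisfiable if and only if $\Phi$ is. Writing $N=n+s+1$ for the number of variables of $\Phi'$, build $G$ by creating, for each variable $z$ of $\Phi'$, an edge between a ``true'' vertex $z^{\mathrm{T}}$ and a ``false'' vertex $z^{\mathrm{F}}$; making the clause vertices of $\Phi'$ into a single clique $K$; and joining a clause vertex to each literal vertex occurring in that clause. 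Since any independent set meets each variable edge at most once and $K$ at most once, $\alpha(G)\le N+1$, and the all-false assignment together with the clause vertex of $(y_1)$ realises this bound, so that $\alpha(G)=N+1$ \emph{independently of $\Phi$}.

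Next I would analyse maximal independent sets. A clause vertex blocks at most one endpoint of each variable edge, so every maximal independent set still contains one vertex of every variable edge; hence each maximal independent set is a full truth assignment plus at most one clause vertex. Those containing a clause vertex have size $N+1$, while an assignment containing no clause vertex is maximal exactly when no clause vertex can be added, i.e. when the assignment satisfies every clause of $\Phi'$. Consequently $G$ has a maximal independent set of size $N<N+1$ if and only if $\Phi'$ (equivalently $\Phi$) is satisfiable, which is to say $G$ is well-covered if and only if $\Phi$ is unsatisfiable.

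Finally, the crux is to verify $G\in\mathbf{E_s}$, i.e. that every independent set $A$ with $|A|\le s$ lies in a maximum independent set. If $A$ already contains a clause vertex, I complete it greedily to one vertex per variable edge, reaching size $N+1$. The delicate case is when $A$ is a partial assignment that happens to satisfy all original clauses, since then no original clause vertex is available to reach size $N+1$; this is exactly where the padding pays off. As $A$ touches at most $s$ variable edges, one of the $s+1$ dummy variables $y_{t_0}$ is untouched, so the clause vertex of $(y_{t_0})$ is non-adjacent to $A$ and can be added; completing the remaining variable edges compatibly then yields an independent set of size $N+1$ containing $A$. Thus every small independent set extends, giving $G\in\mathbf{E_s}$ and completing the reduction. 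The main obstacle, and the reason a new construction is needed, is precisely this last point: guaranteeing extendability of small ``over-satisfying'' sets uniformly in $\Phi$, which the $s+1$ dummy unit clauses resolve by always leaving a clause vertex free for any set of size at most $s$.
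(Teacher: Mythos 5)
Your proof is correct, and while it follows the same overall skeleton as the paper's (reduction from the complement of \textsc{3-Sat}, a clique of clause vertices attached to variable gadgets, well-coveredness holding iff no satisfying assignment leaves every clause vertex dominated), it handles the one genuinely delicate point --- forcing $G$ into $\mathbf{E_s}$ uniformly in $\Phi$ --- by a different mechanism. The paper first brute-forces all $O(n^s\cdot 2^s)$ partial assignments of $s$ variables and outputs a dummy negative instance if one of them already satisfies every clause; only under the resulting promise does its gadget (a triangle $u_i,\overline{u_i},w_i$ per variable, with $w_i$ as a ``free'' vertex never adjacent to $K$) become $\mathbf{E_s}$. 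You instead pad the formula with $s+1$ fresh variables and unit clauses, which guarantees \emph{syntactically} that every independent set of size at most $s$ leaves some dummy clause vertex available, so $G\in\mathbf{E_s}$ unconditionally and no preprocessing or case distinction on $\Phi$ is needed; your variable gadget can then be a single edge rather than a triangle. Your variant is arguably cleaner (a genuine many-one reduction on all instances, rather than one preceded by an exhaustive test), at the cost of enlarging the clause clique by $s+1$; both yield polynomial reductions for fixed $s$. The only detail worth making explicit is the standard normalisation that no clause of $\Phi$ contains a literal together with its negation, which you use implicitly when asserting that a clause vertex blocks at most one endpoint of each variable edge (the paper invokes the same assumption elsewhere); tautological clauses can simply be deleted beforehand.
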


\begin{proof}
From \cite{CHVATAL1993179}, recognizing well-covered graphs is in coNP.
We reduce from the complement of \textsc{3-Sat}. Let $\Phi$ be a 3-CNF formula with a set $C$ of clauses $c_1,...,c_m$ and a set $X$ of variables $x_1,...,x_n$.  

We first test all $O(n^s \cdot 2^s)$ partial assignments of $s$ variables. If, for one of them, all clauses are satisfied, then it means that $\Phi$ is a satisfiable formula, and we output in this case a dummy negative instance of our problem.
In the following, we can thus assume that for any partial assignment of $s$ variables, there is at least one clause which is not (yet) satisfied.

Let $G$ be the graph obtained from $\Phi$ as follows:
\begin{itemize}
\item create a clique $K$ of size $m$ with vertex labels $v_1, \dots, v_m$;
\item for each variable $x_i$, add three new vertices $u_i, \overline{u_i}, w_i$ that form a triangle;
\item if $x_i$ (resp. $\overline{x_i}$) appears in clause $c_j$, add the edge $u_iv_j$ (resp. $\overline{u_i}v_j$).
\end{itemize}

\begin{claim}
    $G$ is $\mathbf{E_s}$.
\end{claim}
\begin{proof}
    It can be observed that $\alpha(G) = n+1$. 
Let $S \subseteq V(G)$ be an independent set of size $s$, and let $I = \{ i \mid S\cap \{u_i, \overline{u_i}, w_i\} = \emptyset\}$. If $S$ contains a vertex from $K$, then $S\cup \{w_i\}_{i\in I}$ is an independent set of size $n+1$ that contains $S$. Assume now that $S \cap K = \emptyset$. If there is a vertex $c$ in $K$ that has no neighbor in $S$, we add $c$ to $S$ and proceed as previously. Otherwise, assigning $true$ (resp. $false$) to every variable $x_i$ such that $u_i \in S$ (resp. $\overline{u_i} \in S$) leads to a partial assignment of at most $s$ variables such that all clauses are satisfied, which is impossible.
\end{proof}

Now, suppose $G$ is well-covered. For any truth assignment to the variables in $X$, we form an independent set $S$ in $G$ by adding to $S$ vertex $u_i$ if variable $x_i$ is set to true, and vertex $\overline{u_i}$ if variable $x_i$ is set to false. Note that $|S| = n$; moreover, as $G$ is well-covered and $\alpha(G) = n+1$, there is a vertex not in $S$ that has no neighbor in $S$. By construction, this vertex is $v_i$ for some $i \in [m]$, and so clause $c_i$ does not contain a true literal. Therefore, $\Phi$ is unsatisfiable.  

Conversely, suppose that $\Phi$ is  unsatisfiable. Let $S$ be any maximal independent set in $G$. By maximality, $|S\cap \{u_i, \overline{u_i}, w_i\}| =1$ for each $i \in [n]$.  Moreover, observe that $|S \cap K| = 1$ since if $S \cap K = \emptyset$ then, by setting to true the variables in $S \setminus \{w_i\}_{1\leqslant i \leqslant n}$, one obtains a (possibly partial) truth assignment  that satisfies $\Phi$, which is a contradiction. Thus $|S| = n+1$ as needed. 
\end{proof}

\section{Recognizing $\mathbf{E_s}$ graphs}\label{section:3}

In this section, we prove Theorem \ref{thm:es}. First, let us make a few observations. 
Recall that recognizing $1$-extendable graphs is NP-hard \cite{berge20231}, though the existence of an NP (or coNP) certificate is not clear. 
Moreover, its NP-hardness proof is via a reduction from \textsc{Exact MIS}, where, given a graph $G$ and an integer $k$, the problem is to determine if $\alpha(G)=k$, which, in turn, is NP-hard via a reduction from the classical  \textsc{Maximum Independent Set} problem. Let us note that \textsc{Exact MIS} belongs to the DP complexity class, introduced by Papadimitriou \cite{papadimitriou1982complexity}, and consists of problems which are the ``intersection'' of an NP problem and a coNP problem. Notice that this is different from being in NP and in coNP, since being DP-complete actually implies being both NP-hard and coNP-hard, which in turn implies not being in NP nor in coNP, unless NP = coNP. However, it appears that DP does not contain the problem of recognizing $1$-extendable graphs. Indeed, we show rather surprisingly that recognizing $1$-extendable graphs, and more generally $\mathbf{E_s}$ graphs, is $\Theta_2^p$-complete. The complexity class $\Theta_2^p = \text{P}^{\text{NP}[O(\log(n)]}$ refers to problems solvable by a polynomial-time deterministic algorithm using a logarithmic number of calls to a \textsc{Sat} oracle. This class has received attention, mainly because it contains various optimization problems across fields such as AI and voting schemes \cite{riege2006completeness,spakowski2000theta,wagner1987more,wagner1990bounded}. 
Interestingly, it also contains an approximation version of well-coveredness~\cite{hemaspaandra1998recognizing}, where one is interested in the graph class where the greedy algorithm for \textsc{Maximum Independent Set} provides a constant approximation ratio. We first show  that recognizing $\mathbf{E_s}$ graphs is in $\Theta_2^p$.

\begin{lemma}\label{lem:ex1}
    For any $s\geqslant 1$, recognizing $\mathbf{E_s}$ graphs is in $\Theta_2^p$.
\end{lemma}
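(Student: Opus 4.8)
The plan is to place the problem in the class $\mathrm{P}^{\mathrm{NP}}_{||}$ of problems solvable in polynomial time with polynomially many \emph{non-adaptive} (parallel) \textsc{Sat}-oracle calls, and then invoke the well-known equality $\mathrm{P}^{\mathrm{NP}}_{||} = \mathrm{P}^{\mathrm{NP}[\log]} = \Theta_2^p$; membership in the parallel class is the convenient target here. Two observations drive the argument. First, since $s$ is a fixed constant, a graph on $n$ vertices has only $O(n^s)$ independent sets of size at most $s$, and they can all be enumerated in polynomial time. Second, I would reformulate the property: writing $f(A)$ for the maximum size of an independent set containing $A$, a graph $G$ is $\mathbf{E_s}$ if and only if $f(A) = \alpha(G)$ for every independent set $A$ with $|A| \leqslant s$ (note $f(A) \leqslant \alpha(G)$ always, so the condition is really $f(A) \geqslant \alpha(G)$).

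Next I would write down all the oracle queries up front, before reading any answer, so that they are asked non-adaptively. On the one hand, the queries $Q_t := $ ``is $\alpha(G) \geqslant t$?'' for $t \in [n]$ are NP-questions (guess an independent set of size $\geqslant t$), and their answers determine $\alpha(G)$ exactly. On the other hand, for each enumerated independent set $A$ with $|A| \leqslant s$ and each $t \in [n]$, the query $Q_{A,t} := $ ``does some independent set of size $\geqslant t$ contain $A$?'' is again an NP-question (guess the superset and verify independence, containment, and size). There are $O(n^{s+1})$ such queries in total, all writable in polynomial time, hence genuinely parallel.

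The post-processing is then purely combinatorial and polynomial-time: read off $\alpha = \alpha(G)$ from the answers to the $Q_t$, and accept if and only if $Q_{A,\alpha}$ holds for every enumerated $A$, i.e.\ every independent set of size at most $s$ extends to an independent set of size $\alpha$. By the reformulation above this accepts exactly the $\mathbf{E_s}$ graphs, and since the whole procedure consists of polynomially many non-adaptive NP-queries followed by a polynomial-time computation, it witnesses membership in $\mathrm{P}^{\mathrm{NP}}_{||} = \Theta_2^p$.

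The main obstacle to anticipate is that the defining condition of $\mathbf{E_s}$ quantifies \emph{universally} over the small independent sets $A$ and, for each, universally over all candidate extensions (``$A$ does not extend'' is itself a coNP statement), so a naive reading only yields $\Pi_2^p$. The crux is that this double quantification collapses: the outer universal quantifier ranges over only polynomially many $A$ because $s$ is fixed, so it is handled by explicit enumeration and a final conjunction; and the inner part is phrased as the \emph{positive} query $Q_{A,t}$ (``$A$ \emph{does} extend to size $t$'') rather than its coNP negation. Asking these positive queries for all thresholds $t$ simultaneously and only afterwards specializing to $t=\alpha$ is precisely what keeps the computation within the non-adaptive — equivalently, logarithmically-adaptive — regime of $\Theta_2^p$, rather than the full power of $\mathrm{P}^{\mathrm{NP}}$ or higher.
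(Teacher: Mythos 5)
Your proof is correct, but it reaches $\Theta_2^p$ by a different characterization than the paper. The paper works directly with the $\mathrm{P}^{\mathrm{NP}[\log]}$ definition: it binary-searches for $\alpha(G)$ using $O(\log n)$ adaptive \textsc{Sat} calls, and then bundles the entire universal check into a \emph{single} auxiliary NP problem (``\textsc{Partial $s$-Extendable}'': does every independent set of size at most $s$ extend to size at least $r$?), whose polynomial certificate is one extending independent set per small set, of total size $O(r\cdot n^s)$. You instead lay out a polynomial-size grid of non-adaptive queries $Q_t$ and $Q_{A,t}$ --- one per small independent set and per threshold --- and then invoke the classical collapse $\mathrm{P}^{\mathrm{NP}}_{||}=\mathrm{P}^{\mathrm{NP}[\log]}=\Theta_2^p$ (Buss--Hay/Hemachandra/Wagner). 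Both routes rest on the same two insights (only $O(n^s)$ small independent sets for fixed $s$, and extendability phrased as a positive NP query rather than its coNP negation), so neither has a gap; the trade-off is that the paper's argument is self-contained given the definition of $\Theta_2^p$ and uses only $O(\log n)$ queries, whereas yours uses $O(n^{s+1})$ queries and leans on the parallel-versus-logarithmic equivalence theorem as an extra (standard but nontrivial) ingredient, in exchange for avoiding the need to design a single bundled NP query and its certificate.
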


\begin{proof}
We must describe an algorithm that solves the problem with only a logarithmic number of calls to a \textsc{Sat} oracle. To do so, for a fixed $s\geqslant 1$, we introduce the problem \textsc{Partial $s$-Extendable}, where, given a graph $G$ and an integer $r$, the task is to determine if each independent set of size at most $s$ can be extended to an independent set of size at least $r$. This problem is in NP, a certificate being the set of independent sets of $G$ of size $r$ that extend all independent sets of size at most $s$, and this certificate has a size of at most $O(r\cdot n^s)$. Therefore, \textsc{Partial $s$-Extendable} can be solved with a \textsc{Sat} oracle.

The algorithm recognizing $\mathbf{E_s}$ graphs operates as follows on input $G$:
\begin{enumerate}
    \item Compute $r=\alpha(G)$ using binary search, requiring $O(\log(n))$ \textsc{Sat} oracle calls to solve \textsc{Maximum Independent Set}.
    \item Solve \textsc{Partial $s$-Extendable} on input $(G,r)$ using only one \textsc{Sat} oracle.
\end{enumerate}
This algorithm straightforwardly recognizes $\mathbf{E_s}$ graphs in polynomial time, utilizing $O(\log(n))$ \textsc{Sat} oracle calls, thus the problem is in $\Theta_2^p$ as required. 
\end{proof}

Thus, it remains to establish hardness. To do so, we shall proceed by induction on $s$. To begin with, we need an complete problem for the class $\Theta_2^p$. An example of such a problem is \textsc{MIS Equality} which asks, given two graphs $G$ and $H$, if $\alpha(G)=\alpha(H)$. Notice that the equivalent problems for graph coloring, vertex cover and clique are also $\Theta_2$-complete \cite{wagner1987more,hemaspaandra1998recognizing}. In addition, we shall need the following construction (extracted from \cite{berge20231}).

For two disjoint graphs $G$ and $H$, let $\pi(G,H)$ be the graph obtained from $G$ and $H$ by adding  a complete bipartite graph $(\Pi_G := \{\pi_u \mid u \in V(G)\}, \Pi_H := \{\pi_u \mid u \in V(H)\})$, and adding the edge $u\pi_u$  for each $u\in V(G)\cup V(H)$, 
Notice that $\pi(G,H)$ has $2(|V(G)|+|V(H)|)$ vertices and $\alpha(\pi(G,H)) = \max(|V(G)|+\alpha(H), |V(H)|+\alpha(G))$. Let us first address the base case $s=1$. 
\begin{lemma}\label{lem:ex2}
    Recognizing $1$-extendable graphs is $\Theta_2^p$-hard.
\end{lemma}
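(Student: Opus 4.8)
The plan is to reduce from \textsc{MIS Equality}, which is $\Theta_2^p$-complete, using the construction $\pi(\cdot,\cdot)$ recalled above. Writing $n_G = |V(G)|$, $n_H = |V(H)|$ and recalling $\alpha(\pi(G,H)) = \max(n_G + \alpha(H), n_H + \alpha(G))$, the heart of the argument is the following equivalence, which I would isolate as a claim:
\[
\pi(G,H) \text{ is } 1\text{-extendable} \iff n_G + \alpha(H) = n_H + \alpha(G).
\]
Granting this, it remains only to massage an arbitrary instance of \textsc{MIS Equality} so that the right-hand condition becomes exactly $\alpha(G) = \alpha(H)$.

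For the direction asserting that equality implies $1$-extendability, I would simply exhibit, for every vertex, a maximum independent set containing it, writing $\alpha^{*} := n_G + \alpha(H) = n_H + \alpha(G)$. For $\pi_w \in \Pi_G$ take $\Pi_G$ together with a maximum independent set of $H$; for $\pi_w \in \Pi_H$ take the symmetric set; for $w \in V(G)$ take $\{w\} \cup (\Pi_G \setminus \{\pi_w\})$ together with a maximum independent set of $H$; and symmetrically for $w \in V(H)$. A short inspection of the edge set of $\pi(G,H)$ shows each such set is independent and has size exactly $\alpha^{*}$.

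For the contrapositive, assume without loss of generality $n_G + \alpha(H) > n_H + \alpha(G)$, so $\alpha^{*} = n_G + \alpha(H)$, and argue that no vertex of $\Pi_H$ lies in a maximum independent set: any independent set meeting $\Pi_H$ avoids all of $\Pi_G$, hence the $H$-side $V(H)\cup\Pi_H$ contributes at most $n_H$ vertices (a vertex of $H$ and its private copy $\pi_u$ cannot both be chosen) while the $G$-side contributes at most $\alpha(G)$, giving total at most $n_H + \alpha(G) < \alpha^{*}$. I expect the main obstacle to be the bookkeeping in this case analysis: the vertices of $V(G)$ and $V(H)$ are the delicate ones, since each can be extended to a large independent set in two genuinely different ways (through the $\Pi_G$ side or the $\Pi_H$ side), and one must confirm that the better of the two reaches $\alpha^{*}$ precisely when the balance condition holds.

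Finally, to derive hardness from general \textsc{MIS Equality}, I would first pad the smaller graph with \emph{universal} vertices: adding a vertex adjacent to all others raises the vertex count by one while leaving $\alpha$ unchanged (any independent set containing it has size $1$), so after inserting $|n_G - n_H|$ such vertices on the smaller side we may assume $n_G = n_H$ without altering $\alpha(G)$ or $\alpha(H)$. The balance condition then collapses to $\alpha(G) = \alpha(H)$, so $\pi(G,H)$ is $1$-extendable if and only if the \textsc{MIS Equality} instance is positive. Since both the padding and $\pi(\cdot,\cdot)$ are computable in polynomial time, this is a valid many-one reduction and establishes the claimed hardness.
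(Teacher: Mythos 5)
Your proposal is correct and follows essentially the same route as the paper: a reduction from \textsc{MIS Equality} via the gadget $\pi(G,H)$, with universal-vertex padding to equalize the vertex counts, and the same explicit maximum independent sets witnessing extendability in one direction and the same counting bound ($n_H+\alpha(G)$ versus $n_G+\alpha(H)$) ruling it out in the other. Stating the key equivalence in the unbalanced form $n_G+\alpha(H)=n_H+\alpha(G)$ before specializing is only a cosmetic reordering of the paper's argument.
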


\begin{proof}

    To prove the lemma, we reduce from \textsc{MIS Equality}. 
    
    Let $(G, H)$ be an instance of \textsc{MIS Equality}, where up to the addition of some universal vertices, both $G$ and $H$ have the same number $n$ of vertices. Let $G':= \pi(G,H)$, so that $G'$ has $4n$ vertices and $\alpha(G') = n+\max(\alpha(G), \alpha(H))$. We show that $G'$ is $1$-extendable if and only if $\alpha(G) = \alpha(H)$.

    Suppose that $\alpha(G)=\alpha(H) =\alpha$. For each vertex $v\in V(G)$, set $S_v = \{v\} \cup (\Pi_G \setminus \{\pi_v\}) \cup S_H$, where $S_H$ is any MIS of $H$. Then $S_v$ has cardinality $n+\alpha$ and thus is an MIS of $G'$. Notice that $\Pi_v \cup S_H$ is also an MIS of $G'$, and thus $\pi_v$ belongs to an MIS too. By symmetry, all vertices from $V(H) \cup \Pi_H$ belong to an MIS of $G'$, thus $G'$ is $1$-extendable.

    Now, suppose that $G'$ is $1$-extendable. Then either $\alpha(G') = \alpha(G) + n$ or $\alpha(G') = \alpha(H) + n$.  Assume  that $\alpha(G') = n+\alpha(G)$. Let $v\in V(G)$.  By $1$-extendability of $G'$, $\pi_v$ is in an MIS $S$ of $G'$. Let $S_G = (V(G) \cup \Pi_G)\cap S$ and $S_H = (V(H) \cup \Pi_H)\cap S$. Since $\pi_v\in S$ and $S$ is independent, $S$ contains no vertex from $\Pi_H$. It follows that $S_H$ is an MIS of $H$ of cardinality $\alpha(H)$. In addition, $S_G$ is an MIS of $G'[V(G)\cup \Pi_v]$ and thus has cardinality $n$. Altogether, $|S|=n+\alpha(H) = \alpha(G')$, which gives $\alpha(G) = \alpha(H)$. This completes the proof. 
\end{proof}

\begin{corollary}[\cite{berge20231}]
    Recognizing $1$-extendable graphs is both NP-hard and coNP-hard, and thus  neither in NP nor in coNP unless NP=coNP.
\end{corollary}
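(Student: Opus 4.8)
The plan is to obtain the corollary directly from Lemma~\ref{lem:ex2}. The key observation is that a single \textsc{Sat} oracle call already suffices to decide any language in NP or in coNP, so that $\text{NP} \cup \text{coNP} \subseteq \text{P}^{\text{NP}[1]} \subseteq \Theta_2^p$. Consequently, any problem that is $\Theta_2^p$-hard under polynomial-time many-one reductions is automatically both NP-hard and coNP-hard: given any $A \in \text{NP}$ (respectively $A \in \text{coNP}$), I would compose the many-one reduction from $A$ to \textsc{MIS Equality} (which exists because \textsc{MIS Equality} is $\Theta_2^p$-complete) with the many-one reduction from \textsc{MIS Equality} to \textsc{1-Extendable} built in the proof of Lemma~\ref{lem:ex2}. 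This delivers both hardness claims simultaneously.

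For the final clause of the statement, I would invoke the standard collapse argument. If \textsc{1-Extendable} were in NP, then, being coNP-hard, every coNP language would many-one reduce to a language in NP, forcing $\text{coNP} \subseteq \text{NP}$ and hence $\text{NP} = \text{coNP}$; symmetrically, membership in coNP together with NP-hardness would give $\text{NP} \subseteq \text{coNP}$, again collapsing the two classes. Taking contrapositives, unless $\text{NP} = \text{coNP}$ the problem lies in neither class.

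An alternative route, closer to the original argument of~\cite{berge20231} and independent of the $\Theta_2^p$ machinery, is to reduce directly from \textsc{Exact MIS}, i.e.\ deciding whether $\alpha(G) = k$ for a given pair $(G,k)$. This problem is DP-complete, hence both NP-hard and coNP-hard, and the NP-hardness reduction of~\cite{berge20231} is in fact a \emph{many-one} reduction from \textsc{Exact MIS}; the same reduction therefore transports both lower bounds to \textsc{1-Extendable} at once. I do not anticipate a genuine obstacle here, since the entire content is already packaged in the containment $\text{NP} \cup \text{coNP} \subseteq \Theta_2^p$ together with the hardness established in Lemma~\ref{lem:ex2}. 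The only subtlety worth making explicit is that the relevant hardness must be with respect to polynomial-time many-one (not Turing) reductions, which is precisely what allows a single reduction to yield NP-hardness and coNP-hardness together.
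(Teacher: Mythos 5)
Your proposal is correct and matches the paper's intended derivation: the corollary is stated without proof immediately after Lemma~\ref{lem:ex2}, precisely because $\mathrm{NP} \cup \mathrm{coNP} \subseteq \Theta_2^p$ and the many-one reduction from the $\Theta_2^p$-complete problem \textsc{MIS Equality} transports both hardness claims, with the final clause following from the standard collapse argument you give. Your alternative route via the DP-completeness of \textsc{Exact MIS} is also exactly the observation the paper makes in the preamble of Section~\ref{section:3}, so nothing is missing.
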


To address the general case $s\geq 1$, we require one more lemma. Given a graph $G$, let $G^+=\pi(G_1,G_2)$ where $G_1$ and $G_2$ are two copies of $G$. Notice that $\alpha(G^+)=|V(G)|+\alpha(G)$. For brevity, we write $\Pi_1=\Pi_{G_1}$ and $\Pi_2=\Pi_{G_2}$ in $G^+$.

\begin{lemma}\label{lemma:G+}
    For any graph $G$ and any $s\geqslant 1$, $G^+\in \mathbf{E_s}$ if and only if $G\in \mathbf{E_{s-1}}$. 
\end{lemma}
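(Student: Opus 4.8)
The plan is to reduce everything to a structural description of the maximum independent sets (MIS) of $G^+$ and then exploit the symmetry between the two copies $G_1,G_2$. Write $n=|V(G)|$ and $\alpha=\alpha(G)$, so that $\alpha(G^+)=n+\alpha$. The starting point is to understand an arbitrary independent set $S$ of $G^+$ by splitting it as $S=S_1\cup S_2\cup P_1\cup P_2$, where $S_i=S\cap V(G_i)$ and $P_i=S\cap \Pi_i$. Since $\Pi_1$ and $\Pi_2$ induce a complete bipartite graph, no independent set can meet both, so at most one of $P_1,P_2$ is nonempty; and the pendant edges $u\pi_u$ force $P_i\subseteq\{\pi_u : u\in V(G_i)\setminus S_i\}$. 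A short counting argument then gives $|S|\le n+\alpha$, with equality forced into a rigid shape: I want to record the fact that every MIS $T$ with $T\cap\Pi_2=\emptyset$ has $T\cap V(G_2)$ equal to an MIS of $G_2$, while $T$ restricted to the first copy is some independent set $S_1$ of $G_1$ together with \emph{exactly} the pendants $\pi_u$ for $u\notin S_1$ (and symmetrically with the roles of the copies swapped). This characterization is the technical core on which both implications rest.

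For the direction $G\in\mathbf{E_{s-1}}\Rightarrow G^+\in\mathbf{E_s}$, I would take an arbitrary independent set $S$ of $G^+$ with $|S|\le s$ and extend it to an MIS by ``filling one side''. Extending via side $2$ means: keep $S_1$, extend $S_2$ to an MIS $M_2$ of $G_2$ using the $(s-1)$-extendability of $G$, and then add all of $\{\pi_u : u\in V(G_1)\setminus S_1\}$. One checks this yields an independent set of size $n+\alpha$ containing $S$, \emph{provided} $S\cap\Pi_2=\emptyset$ and $|S_2|\le s-1$; the symmetric move extends via side $1$. The point is that one of these two moves is always available. If $S$ meets $\Pi_1$, then $S\cap\Pi_2=\emptyset$ and that $\Pi_1$-vertex consumes one unit of the size budget, so $|S_2|\le s-1$, giving side $2$ (symmetrically if $S$ meets $\Pi_2$). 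If $S$ avoids both $\Pi_1$ and $\Pi_2$, then $|S_1|+|S_2|\le s$ forces $\min(|S_1|,|S_2|)\le s-1$ (otherwise the sum would be at least $2s>s$), and we extend via the smaller side.

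For the converse $G^+\in\mathbf{E_s}\Rightarrow G\in\mathbf{E_{s-1}}$, I would use a one-vertex gadget to transfer extendability downward. Given an independent set $A$ of $G$ with $|A|\le s-1$, view $A$ inside the copy $G_2$ and pick any $w\in V(G_1)$. Then $A\cup\{\pi_w\}$ is independent in $G^+$, since $\pi_w$ has no neighbour in $V(G_2)$, and its size is at most $s$. By $s$-extendability of $G^+$ it lies in some MIS $T$; as $\pi_w\in\Pi_1\cap T$ we get $T\cap\Pi_2=\emptyset$, so the structural description forces $T\cap V(G_2)$ to be an MIS of $G_2$, and it contains $A$. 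Reading this back in $G$ shows that $A$ extends to an MIS of $G$, as required.

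The main obstacle is the first step: pinning down precisely which MIS of $G^+$ occur and establishing the rigid equality form, since every later claim is a short deduction from it. The only other place demanding care is the budget bookkeeping in the forward direction—specifically, verifying that spending one unit on a $\Pi$-vertex, or splitting at most $s$ vertices across the two copies, always leaves a side that can be completed using only $\mathbf{E_{s-1}}$ rather than $\mathbf{E_s}$.
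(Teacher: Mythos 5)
Your proposal is correct and follows essentially the same approach as the paper: the same filling of one copy's $\Pi$-set around $S_1$ plus an MIS of the other copy obtained from $(s-1)$-extendability, the same budget case analysis on which of $\Pi_1,\Pi_2$ (if either) is hit, and the same converse trick of adjoining a single vertex of the opposite $\Pi$-set to force the MIS to restrict to an MIS of $G$. The explicit rigidity statement for maximum independent sets avoiding $\Pi_2$ is only implicit in the paper's converse, but it is the same counting argument.
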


\begin{proof}
    Suppose first that $G\in \mathbf{E_{s-1}}$. Let $A\subseteq V(G^+)$ be an independent set of size at most $s$. We must show that $A$ is contained in an MIS of $G^+$.  
    
    Suppose $A\cap (\Pi_1\cup \Pi_2) = \emptyset$. If $A\subseteq V(G_1)$, then $A$ can be extended into an independent set of size $|V(G)|$ by adding only vertices of $\Pi_1$, and then, in turn, into an independent set of size $|V(G)|+\alpha(G)$ by adding any MIS of $G_2$. Similarly, if $A\subseteq V(G_2)$ then we are done. So we can assume that  $A$ intersects both $V(G_1)$ and $V(G_2)$ (notice that this case does not happen if $s=1$). As $|A| \leq s$, we have that $|A\cap V(G_1)|\leqslant s-1$. Since $G_1\in \mathbf{E_{s-1}}$, $A\cap V(G_1)$ can be extended into an MIS $\tilde{A_1}$ of $G_1$. At the same time, $A\cap V(G_2)$ can be extended to an independent set $\tilde{A_2}$ of size $|V(G)|$ using only new vertices from $\Pi_2$. Notice that $\tilde{A_1} \cup \tilde{A_2}$ is an independent set of size $|V(G)|+\alpha(G)$ that contains $A$.

    So we can assume that $A\cap (\Pi_1\cup \Pi_2) \neq \emptyset$. Since there are all possible edges between $\Pi_1$ and $\Pi_2$ in $G^+$, trivially $A$ intersects at most one of $\Pi_1$ and $\Pi_2$, say $\Pi_1$. Let $A_1= A\cap (V(G_1) \cup \Pi_1)$, and $A_2 = A\cap V(G_2)$. As in the preceding paragraph, $A_1$ can be extended to an independent set $\tilde{A_1}$ of size $|V(G)|$ using only new vertices from $\Pi_1$, and since $|A_2| \leqslant s-1$ and $G_2\in \mathbf{E_{s-1}}$, $A_2$ can be extended to an independent set $\tilde{A_2}$ of size $\alpha(G)$ using only vertices from $V(G_2)$. Then $\tilde{A_1} \cup \tilde{A_2}$ is an MIS of $G^+$ that contains $A$. Therefore, $G^+$ is $\mathbf{E_s}$.

    Conversely, suppose $G^+\in \mathbf{E_s}$. Let $A\subseteq V(G)$ be an independent set of $G$ of size at most $s-1$. We must show that $A$ extends into an MIS in $G$.  Let $v\in \Pi_2$, and consider the independent set $A_1\cup \{v\}$ where $A_1$ is exactly $A$ in the copy $G_1$ of $G$ in $G^+$. Since $A_1\cup\{v\}$ is an independent set of $G^+$ of size at most $s$, it can be extended to an MIS $S$ of $G^+$, which has size $|V(G)|+\alpha(G)$. Since $v\in S$, $S\cap \Pi_1 = \emptyset$. Thus, $S$ can be partitioned into two sets $S_1$ and $S_2$, where $S_1$ induce a maximum independent set of $G_1$ and $S_2$ a maximum independent set of $G^+[V(G_2) \cup \Pi_2]$. We notice that $|S_2|=|V(G)|$, and thus $|S_1|=\alpha(G)$. In addition, $A_1\subseteq S_1$ and so $A$ can be extended to an MIS of $G$.
\end{proof}

We are now ready to prove Theorem \ref{thm:es}; in fact, we prove the following more general result. 

\begin{theorem}\label{thm:EsE(s-1)}
    Let $s\geqslant 2$ and $G$ be $\mathbf{E_{s-1}}$. Then deciding whether $G$ is $\mathbf{E_s}$  is $\Theta_2^p$-complete.
\end{theorem}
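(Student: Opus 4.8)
The plan is to prove the two directions of $\Theta_2^p$-completeness separately: membership comes essentially for free, while hardness is obtained by induction on $s$ built on the two preceding lemmas. For membership, I would simply invoke Lemma \ref{lem:ex1}: recognizing $\mathbf{E_s}$ graphs already lies in $\Theta_2^p$ on \emph{all} inputs, and restricting attention to the subclass of $\mathbf{E_{s-1}}$ graphs (i.e.\ imposing a promise) can only make the problem easier, so it stays in $\Theta_2^p$.

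The heart of the argument is hardness, which I would establish by induction on $s$ for the slightly cleaner claim that for every $s\geqslant 1$ the promise problem ``given $G\in\mathbf{E_{s-1}}$, decide whether $G\in\mathbf{E_s}$'' is $\Theta_2^p$-hard. The base case $s=1$ is exactly Lemma \ref{lem:ex2}: every graph is trivially $\mathbf{E_0}$, so the promise is vacuous and the statement reduces to the unpromised $\Theta_2^p$-hardness of recognizing $1$-extendable graphs. For the inductive step, assuming the claim at level $s-1$ (so $s\geqslant 2$), I would take an arbitrary instance $G$ of the $(s-1)$-level problem, i.e.\ a graph promised to be $\mathbf{E_{s-2}}$, and map it to $G^+=\pi(G_1,G_2)$ as defined just before Lemma \ref{lemma:G+}. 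This map is computable in polynomial time since $|V(G^+)|=4|V(G)|$.

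The key point is that Lemma \ref{lemma:G+} is applied \emph{twice}, at two consecutive parameter values. Applied with parameter $s$ it yields $G^+\in\mathbf{E_s}\iff G\in\mathbf{E_{s-1}}$, so yes-instances map to yes-instances and no-instances to no-instances. Applied with parameter $s-1$ (legitimate because $s-1\geqslant 1$) it yields $G^+\in\mathbf{E_{s-1}}\iff G\in\mathbf{E_{s-2}}$; since the input promise guarantees $G\in\mathbf{E_{s-2}}$, this forces $G^+\in\mathbf{E_{s-1}}$, so that $G^+$ genuinely satisfies the promise of the target problem. Hence $G\mapsto G^+$ is a valid polynomial-time many-one reduction between the two promise problems, and $\Theta_2^p$-hardness propagates from level $s-1$ to level $s$. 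Combining with membership gives completeness, which proves Theorem \ref{thm:EsE(s-1)} and, together with Lemma \ref{lem:ex2} as the $s=1$ case, all of Theorem \ref{thm:es}.

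I expect the main subtlety, rather than any deep obstacle, to be the bookkeeping around the promise and the parameter shift: one must verify not merely that the reduction transports $\mathbf{E_s}$/non-$\mathbf{E_s}$ correctly, but also that its output lands inside the promised class $\mathbf{E_{s-1}}$. This is precisely why the second, parameter-$(s-1)$ application of Lemma \ref{lemma:G+} is needed, and why the induction must be anchored at $s=1$ even though the theorem is stated for $s\geqslant 2$. Once this is set up correctly, everything else is a direct consequence of the already-established lemmas.
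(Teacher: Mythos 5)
Your proposal is correct and follows essentially the same route as the paper: membership via Lemma \ref{lem:ex1}, and hardness by induction on $s$ anchored at Lemma \ref{lem:ex2}, using the reduction $G \mapsto G^+$ and Lemma \ref{lemma:G+} both to transport the answer (at parameter $s$) and to certify that $G^+$ satisfies the promise $\mathbf{E_{s-1}}$ (at parameter $s-1$). Your write-up merely makes explicit the double application of Lemma \ref{lemma:G+} that the paper states in a single sentence.
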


\begin{proof}
    By Lemma \ref{lem:ex1}, the problem is in $\Theta_2^p$. To show hardness, we proceed by induction on $s$. For $s = 1$,  by Lemma \ref{lem:ex2}, $\mathbf{E_1}$ graphs is $\Theta_2^p$-complete on arbitrary (that is, $\mathbf{E_0}$) graphs. So we can assume that $s \geq 2$ and that, given a graph $G$ in  $\mathbf{E_{s-2}}$, deciding whether $G$ is $\mathbf{E_{s-1}}$ is $\Theta_2^p$-hard. 

    Now, since $G\in \mathbf{E_{s-2}}$, it follows from Lemma \ref{lemma:G+} that $G^+\in \mathbf{E_{s-1}}$ and that $G\in \mathbf{E_{s-1}}$ if and only if $G^+\in \mathbf{E_s}$. This implies the result. 
\end{proof}

As said in the introduction, $s$-extendable graphs and $\mathbf{E_s}$ are slightly different. Nevertheless, the same proof, word by word, implies

\begin{corollary}
    Let $s \geq 1$. Recognizing $s$-extendable graphs is $\Theta_2^p$-complete.
\end{corollary}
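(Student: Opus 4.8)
The plan is to separate membership from hardness and to mirror the proofs of Lemma~\ref{lem:ex1}, Lemma~\ref{lem:ex2} and Theorem~\ref{thm:EsE(s-1)}, while being careful about the gap between ``$s$-extendable'' (every independent set of size \emph{exactly} $s$ lies in an MIS) and $\mathbf{E_s}$. For membership I would repeat the proof of Lemma~\ref{lem:ex1} almost verbatim, the only change being that the auxiliary problem \textsc{Partial $s$-Extendable} now asks whether every independent set of size \emph{exactly} $s$ extends to an independent set of size at least $r$. This variant is still in NP, with essentially the same polynomial certificate, so computing $r=\alpha(G)$ by binary search ($O(\log n)$ oracle calls) followed by a single oracle call again places the problem in $\Theta_2^p$.

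For hardness I would argue as follows. The case $s=1$ is immediate, since ``$1$-extendable'' and $\mathbf{E_1}$ denote the same class, so Lemma~\ref{lem:ex2} already gives $\Theta_2^p$-hardness. For $s\geqslant 2$ I would reduce from the problem of deciding whether a graph $G\in\mathbf{E_{s-2}}$ belongs to $\mathbf{E_{s-1}}$; this is $\Theta_2^p$-hard by Theorem~\ref{thm:EsE(s-1)} (with Lemma~\ref{lem:ex2} supplying the case $s=2$, where $\mathbf{E_{s-2}}=\mathbf{E_0}$ is the class of all graphs). Given such a $G$, the reduction outputs $G^+$. Applying Lemma~\ref{lemma:G+} with $G\in\mathbf{E_{s-2}}$ yields both that $G^+\in\mathbf{E_{s-1}}$ and that $G^+\in\mathbf{E_s}$ if and only if $G\in\mathbf{E_{s-1}}$.

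The final ingredient is the observation that, for a graph already lying in $\mathbf{E_{s-1}}$, membership in $\mathbf{E_s}$ is equivalent to being (exactly) $s$-extendable, because $\mathbf{E_s}$ merely adds the level-$s$ requirement to the requirements at all lower levels, which $G^+\in\mathbf{E_{s-1}}$ already guarantees. Combining this with the previous paragraph, $G^+$ is $s$-extendable if and only if $G\in\mathbf{E_{s-1}}$, which is exactly the reduction we need.

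The step I expect to be the main obstacle is precisely this bridging between the two notions, and it is the reason I route the argument through $\mathbf{E_{s-2}}$ inputs rather than reproving a standalone ``exactly $s$'' analogue of Lemma~\ref{lemma:G+}. A naive word-for-word rerun of Lemma~\ref{lemma:G+} with ``exactly $s$'' in place of ``at most $s$'' breaks down when the independent set $A$ of size exactly $s$ meets both copies $G_1$ and $G_2$ and neither $A\cap V(G_1)$ nor $A\cap V(G_2)$ has size $s-1$: exactly-$(s-1)$-extendability then fails to extend either part, and one can in fact build an exactly-$(s-1)$-extendable graph $G$ whose $G^+$ is \emph{not} exactly-$s$-extendable (for instance with $s=4$, taking a disjoint union of edges augmented by two nonadjacent vertices dominating everything). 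Restricting to $\mathbf{E_{s-2}}$ inputs removes this obstruction, since then every subset of size at most $s-2$ of a copy does extend to an MIS of that copy, and the level-$(s-1)$ parts are handled by exactly-$(s-1)$-extendability as before.
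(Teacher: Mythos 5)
Your proof is correct and follows the same route as the paper, which simply asserts that ``the same proof, word by word'' carries over; your bridging observation that for a graph already in $\mathbf{E_{s-1}}$ membership in $\mathbf{E_s}$ coincides with exact $s$-extendability is precisely the detail that makes that assertion rigorous, since the reduction's output $G^+$ is always in $\mathbf{E_{s-1}}$. Your further remark that a literal ``exactly $s$'' rerun of Lemma~\ref{lemma:G+} would fail, together with the counterexample, is a valid and worthwhile clarification that the paper omits.
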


\section{Chordal graphs}
Let us recall some usual definitions and facts.

A vertex $v$ is \textit{simplicial} in a graph $G$ if $G[N(v)]$ is a complete graph. If $v$ is simplicial, we say that $N[v]$ is a simplex. A \textit{chordal graph} is a graph with no induced cycle of size at least four. By a celebrated and folklore theorem of Dirac, a graph $G$ is chordal if and only if (iff) every induced subgraph of $G$ has a simplicial vertex. Phrased differently, $G$ is chordal iff there exists an ordering $v_1, \dots, v_n$ of $V(G)$, called a \emph{perfect elimination ordering}, such that for $i \in [n]$, the graph induced by $\{v_j: v_iv_j \in E(G), j < i\}$ is complete. 
Another classical and convenient object related to a chordal graph is its \textit{clique tree decomposition}. For any chordal graph, its clique tree decomposition is a tree whose vertex set is in bijection with set of maximal cliques of $G$, and such that for every vertex of $G$, the vertices of the tree corresponding to cliques containing this vertex induce a connected subtree. It is a folklore result that such object can be found in linear time.

\subsection{Recognizing $\mathbf{W_k}$ chordal graphs}
In this subsection, we show that deciding whether a chordal graph is $\mathbf{W_k}$ can be done in linear time for each $k \geq 1$. Recall that the case $k = 1$, that is, the well-coveredness of chordal graphs, has already been studied before, showing that a chordal graph is well-covered if, and only if, each vertex belongs to exactly one simplex. Equivalently, the following result was established.

\begin{theorem}[\cite{Prisner}]
    A chordal graph $G$ is well-covered if, and only if there exists a partition of $V(G)$ into simplices.
\end{theorem}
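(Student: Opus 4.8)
The plan is to prove both implications of the equivalence, with the reverse direction (partition $\Rightarrow$ well-covered) being routine and the forward direction requiring an inductive peeling argument whose delicate point is keeping track of which cliques are genuine simplices.

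For the easy direction, suppose $V(G)$ is partitioned into simplices $C_1 = N[v_1], \dots, C_t = N[v_t]$. Since each $C_i$ is a clique, any independent set meets each $C_i$ at most once, so $\alpha(G) \leqslant t$. Conversely, let $S$ be any maximal independent set. If $S \cap C_i = \emptyset$ for some $i$, then in particular no neighbour of $v_i$ (all of which lie in $C_i$) belongs to $S$, so $S \cup \{v_i\}$ is independent, contradicting maximality. Hence $S$ meets every $C_i$, and exactly once, giving $|S| = t$. Thus all maximal independent sets have size $t$, and $G$ is well-covered.

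For the forward direction I would induct on $|V(G)|$. Pick a simplicial vertex $v$ (which exists by chordality) and set $C = N[v]$. The key lemma is that $G - C$ is well-covered with $\alpha(G - C) = \alpha(G) - 1$: if $S$ is a maximal independent set of $G - C$, then $S \cup \{v\}$ is independent in $G$ (no vertex of $S$ lies in $N(v) \subseteq C$) and in fact maximal in $G$ (any vertex extending it would lie outside $C$ and would already extend $S$ in $G - C$), so well-coveredness of $G$ forces $|S| + 1 = \alpha(G)$. Since $G - C$ is again chordal and well-covered, the induction hypothesis yields a partition of $V(G - C)$ into simplices of $G - C$, and I would like to conclude that $C$ together with these parts partitions $V(G)$ into simplices of $G$.

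The main obstacle is this last lifting step: a part $Q = N_{G - C}[z]$ is a simplex of $G - C$ and thus a clique of $G$, but $z$ need not remain simplicial in $G$, since it may have neighbours in $N(v)$. This genuinely fails for chordal graphs that are \emph{not} well-covered (for instance a $K_4$ with a single pendant vertex, where deleting the pendant's simplex leaves a triangle that is not a simplex of the whole graph), so well-coveredness must be invoked precisely here. I would resolve it by showing that each part of $G - C$ admits an apex with no neighbour in $N(v)$: if, for some part $Q$, every vertex that is an apex of $Q$ in $G - C$ had a neighbour in $N(v)$, one could build a maximal independent set of $G$ of size strictly below $\alpha(G)$, contradicting well-coveredness. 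Concretely, I expect the cleanest route is to choose $v$ from a leaf clique of a clique-tree decomposition, so that the only vertices of $C$ with neighbours outside $C$ lie in the separator $C \cap C^{\ast}$ with its parent clique; the required contradiction then reduces to the classical local obstruction to well-coveredness in the chordal setting, namely a $P_3$-type configuration of two non-adjacent simplicial vertices sharing a common neighbour. Carefully verifying that such a configuration is forced whenever a part fails to lift is where the real work lies.
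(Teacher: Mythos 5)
This theorem is quoted in the paper from \cite{Prisner} without proof, so there is no in-paper argument to compare against; I can only assess your proposal on its own terms. Your reverse direction (partition into simplices $\Rightarrow$ well-covered) is complete and correct, and it does not even use chordality. Your setup for the forward direction is also sound: the lemma that for a simplicial vertex $v$ the graph $G - N[v]$ is again well-covered and chordal with $\alpha(G - N[v]) = \alpha(G) - 1$ is correctly stated and correctly proved, and the $K_4$-plus-pendant example accurately isolates where well-coveredness must enter.

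The gap is exactly where you say "the real work lies", and I do not think the repair you sketch is aimed at the right obstruction. The configuration you invoke --- two non-adjacent simplicial vertices of $G$ with a common neighbour --- is the obstruction corresponding to a vertex lying in \emph{two} simplices of $G$, and it is dispatched by an easy swap argument. But a part $Q$ of $G - N[v]$ that fails to lift is a different beast: if $Q$ is not a simplex of $G$, one can check (using disjointness of simplices in a well-covered graph) that every vertex of $Q$ lies in \emph{no} simplex of $G$, and in particular the apexes of $Q$ in $G - N[v]$ are not simplicial in $G$, so the $P_3$-type configuration you want is simply not available. What is actually needed is an argument that produces a maximal independent set of $G$ avoiding $Q$ entirely (hence of size at most $\alpha(G)-1$). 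The natural route uses chordality in the form of a Helly/chain property between two cliques: since $G$ has no induced $C_4$, the traces $N(q)\cap N(v)$ for $q \in Q$ are linearly ordered by inclusion, so some single $w \in N(v)$ is adjacent to every vertex of $Q$ having a neighbour in $N(v)$; one must then dominate the remaining (non-apex) vertices of $Q$ by an independent set drawn from the other parts, compatibly with $w$, which requires further care (or a global clique-tree argument, as in the original proof). None of this is routine, and choosing $v$ in a leaf of the clique tree does not by itself resolve it, so as it stands the forward direction is not proved.
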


The previous theorem directly leads to an algorithm that decides if a chordal graph is well-covered. The naive algorithm that decides if a given chordal graph $G$ is $\mathbf{W_k}$ consists in checking, for any subset of $k-1$ vertices $V' \subseteq V(G)$, whether $G-V'$ is well-covered. The complexity of this naive algorithm is $O(n^k(n+m))$. This running time can be improved by generalizing the previous characterization of well-covered chordal graphs to $\mathbf{W_k}$ chordal graphs.

\begin{proposition}
    Let $k \geqslant 1$. A chordal graph $G$ is $\mathbf{W_k}$, if and only if, if there exists a partition of $V(G)$ into simplices, each of them having at least $k$ simplicial vertices.
\end{proposition}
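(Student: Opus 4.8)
The plan is to prove both directions by reducing to the known characterization of well-covered chordal graphs (the Prisner theorem), using the observation that $G$ is $\mathbf{W_k}$ essentially amounts to $G - V'$ being well-covered for every choice of $k-1$ vertices $V'$, together with a careful analysis of how simplices behave under vertex deletion.

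For the forward direction, I would assume $G$ is $\mathbf{W_k}$ and argue that $V(G)$ partitions into simplices each having at least $k$ simplicial vertices. Since $\mathbf{W_k} \subseteq \mathbf{W_1}$, $G$ is well-covered, so by Prisner's theorem there already exists a partition of $V(G)$ into simplices. The work is to show each simplex in this partition contains at least $k$ simplicial vertices. Here I would use the $\mathbf{W_k}$ property on a well-chosen family of disjoint independent sets. Given a simplex $Q = N[v]$ in the partition, I would pick $A_1 = \{v\}$ and then try to force $k-1$ further disjoint maximum independent sets $S_2,\dots,S_k$ each meeting $Q$ in a distinct simplicial vertex; the existence of $k$ pairwise disjoint MIS each using a different vertex of $Q$ forces $Q$ to have at least $k$ vertices available as simplicial "representatives." The key local fact I would establish first is that in a chordal graph admitting a simplex partition, every maximum independent set is a transversal picking exactly one vertex from each part, and that any MIS vertex lying in $Q$ must be simplicial (a non-simplicial vertex of $Q$ would be dominated and could not extend to an independent transversal). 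Thus $k$ disjoint MIS yield $k$ distinct simplicial vertices in $Q$.

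For the converse, I would assume the partition into simplices $Q_1,\dots,Q_t$ each with at least $k$ simplicial vertices, and show $G$ is $\mathbf{W_k}$. Given disjoint independent sets $A_1,\dots,A_k$, I want disjoint MIS $S_1 \supseteq A_1,\dots,S_k \supseteq A_k$. The natural strategy is to extend each $A_i$ to a transversal of the partition, i.e.\ choosing one vertex from each $Q_j$, in a way that keeps the $k$ choices on each $Q_j$ pairwise distinct. Within a single simplex $Q_j$, since it is a clique, each $A_i$ meets $Q_j$ in at most one vertex, and if $A_i$ already uses a vertex of $Q_j$ I keep it; for the remaining indices I must assign distinct simplicial vertices of $Q_j$. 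Because $Q_j$ has at least $k$ simplicial vertices and at most $k-1$ of them can be "blocked" (forbidden by being adjacent to some $A_i$ outside $Q_j$, or already taken), a counting/Hall-type argument guarantees a valid injective assignment. I would verify that each resulting transversal is independent (the only possible conflicts are within a single $Q_j$, which are avoided by distinctness, since simplicial vertices of distinct parts are non-adjacent by the transversal structure) and of size $t = \alpha(G)$, hence an MIS.

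The main obstacle I expect is the converse assignment step: ensuring that within each simplex the $k$ independent sets can simultaneously be extended to \emph{distinct} simplicial vertices while respecting adjacencies to vertices the $A_i$ occupy in \emph{other} simplices. This is fundamentally a system-of-distinct-representatives problem, and the delicate point is bounding the number of simplicial vertices of $Q_j$ that are excluded for a given index $i$; one must argue that each external vertex of some $A_i$ can block at most a controlled number of simplicial vertices, and that with $k$ available simplicial vertices per part the counting still closes. I would also need the preliminary lemma that the simplex partition is essentially canonical in the relevant sense (each vertex lies in a unique part, and maximum independent sets are exactly the transversals), which cleanly underpins both directions and should follow from chordality plus Prisner's characterization.
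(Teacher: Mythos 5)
Your converse direction is sound, and in fact simpler than you anticipate. A vertex $s$ that is simplicial in $G$ and lies in a part $Q_j=N[w]$ of the simplex partition satisfies $N[s]=Q_j$: its closed neighbourhood is a clique containing $w$, hence contained in $N[w]$, while $Q_j$ being a clique gives the reverse inclusion. So a simplicial vertex of $Q_j$ has no neighbour outside $Q_j$ and can never be ``blocked'' by a vertex of some $A_i$ sitting in another part; the system-of-distinct-representatives step you worry about collapses to a greedy assignment inside each part, where at most $k$ indices compete for at least $k$ simplicial vertices and adjacency constraints are vacuous. No Hall-type counting is needed, and independence of the resulting transversals follows from the same observation.

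The forward direction, however, has a genuine gap: your key local fact --- ``any MIS vertex lying in $Q$ must be simplicial'' --- is false. Take $G$ to be the path on vertices $a,b,c,d$ (in this order): it is well-covered with simplex partition $\{a,b\},\{c,d\}$, the vertex $b$ is not simplicial, yet $\{b,d\}$ is a maximum independent set containing $b$. Consequently, producing $k$ pairwise disjoint maximum independent sets only yields $k$ distinct vertices of $Q$, not $k$ simplicial ones, and your argument gives no lower bound on the number of simplicial vertices of $Q$. The paper proves this direction differently, via Staples' reformulation (which you cite in your opening but never actually use): $G\in\mathbf{W_k}$ if and only if $G-T$ is well-covered with $\alpha(G-T)=\alpha(G)$ for every set $T$ of $k-1$ vertices. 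If some part $V_1$ had at most $k-1$ simplicial vertices, one deletes exactly that set $S$ of simplicial vertices and argues that $G-S$ is no longer well-covered with unchanged independence number, since the vertices of $V_1\setminus S$ can no longer be covered by a simplex partition; this contradicts Staples' criterion. To repair your proof you would need either to switch to this deletion argument or to replace the false ``MIS~$\Rightarrow$~simplicial'' claim with a correct mechanism for extracting $k$ simplicial vertices from each part.
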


\begin{proof}

    Recall that from Staples, a graph $G$ is $\mathbf{W_k}$ if, and only if, for any set $S\subseteq V(G)$ of $k-1$ vertices, $G-S$ is well-covered and $\alpha(G)=\alpha(G-S)$.
    
    If a graph $G$ is $\mathbf{W_k}$, it is indeed well-covered, and thus there exists a partition $V_1,...,V_q$ of $V(G)$ such that each $V_i$ induces a simplex. By contradiction, assume that $V_1$ has less than $k$ simplicial vertices. Let $S$ be the set of simplicial vertices of $V_1$, we have $|S| \leqslant k-1$. Then, notice that $G-S$ is not well-covered using the characterization of chordal well-covered graphs. Indeed, the vertices of $V_1\setminus S$ are not in any simplex.

    Reciprocally, if there exists a partition of $V(G)$ into $V_1,...,V_q$ such that each $V_i$ is a simplex with at least $k$ simplicial vertices, then after removing any $k-1$ vertices of $G$, there still exists a partition of $V(G)$ into simplices, and there is still an MIS of size $q$.
\end{proof}

The previous theorem leads to a linear algorithm to check if a chordal graph is $\mathbf{W_k}$. Indeed, one can find the partition into simplices in time $O(n+m)$, and then check if each simplex has at least $k$ simplicial vertices in time $O(n+m)$.

\subsection{Recognizing $\mathbf{E_s}$ chordal graphs}\label{sec:chordal}

In this subsection, we prove Theorem \ref{thm:chordalmain}.
Let us first establish the first part, which consists in the characterization.

\begin{theorem}
Let $G$ be a chordal graph. $G$ is $1$-extendable iff there is a partition of $V(G)$ into $\alpha(G)$ parts such that each of them is a maximal clique in $G$.
\end{theorem}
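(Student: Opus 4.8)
The plan is to prove both directions by induction on $|V(G)|$, peeling off a simplicial vertex together with its closed neighbourhood. Throughout I would use the folklore facts (via Dirac) that every non-complete chordal graph has at least two non-adjacent simplicial vertices, and that for a simplicial vertex $v$ the set $N[v]$ is the unique maximal clique containing $v$ (in particular $N[v]$ is a maximal clique). The single combinatorial engine behind both directions is the observation that for any simplicial $v$ and $K:=N[v]$ one has $\alpha(G-K)=\alpha(G)-1$: an MIS of $G$ meets the clique $K$ in at most one vertex, giving $\alpha(G-K)\geqslant \alpha(G)-1$; conversely, if $G-K$ contained an independent set $T$ of size $\alpha(G)$, then $T\cup\{v\}$ would be independent (as $N(v)\subseteq K$ and $T\cap K=\emptyset$), contradicting the value of $\alpha(G)$. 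The base case is $G$ complete, where $\alpha(G)=1$, the one-part partition works, and every vertex is an MIS.

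For the forward direction, assume $G$ is $1$-extendable and let $v$, $K=N[v]$ be as above. First I would check that $G-K$ is again $1$-extendable: given $u\in V(G)\setminus K$, pick an MIS $S_u$ of $G$ through $u$; since $\alpha(G-K)=\alpha(G)-1$, $S_u$ cannot avoid $K$, so $S_u\cap K=\{w\}$ for a single $w\neq u$, and $S_u\setminus\{w\}$ is an MIS of $G-K$ containing $u$. By induction $G-K$ partitions into $\alpha(G)-1$ cliques that are maximal \emph{in} $G-K$; adding $K$ gives a partition of $V(G)$ into $\alpha(G)$ cliques. The only point to verify is that each part $Q$ inherited from $G-K$ stays maximal in $G$. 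If some $x\in K$ were adjacent to all of $Q$, then $x\neq v$ (since $v$ has no neighbour outside $K$), so $x\in N(v)$; taking an MIS $S$ of $G$ through $x$, the set $S\setminus\{x\}$ is an MIS of $G-K$ and so meets $Q$ in some vertex $q$, contradicting the independence of $S$ because $x$ is adjacent to $q$. This upgrades maximality and closes the induction.

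For the converse, assume $V(G)$ partitions into maximal cliques $Q_1,\dots,Q_{\alpha(G)}$ and fix a target vertex $z$; I must produce an MIS of $G$ containing $z$. For any simplicial $v'$ with $K'=N[v']=Q_{i'}$, deletion of $K'$ leaves the remaining parts, which stay maximal in $G-K'$ because deleting vertices outside a clique never destroys its maximality; hence $G-K'$ satisfies the hypothesis with $\alpha(G-K')=\alpha(G)-1$ and is $1$-extendable by induction. Provided we can choose $v'$ so that $z\notin N[v']$, the vertex $z$ survives in $G-K'$, lies there in an MIS $J$ of size $\alpha(G)-1$, and $J\cup\{v'\}$ is then an MIS of $G$ through $z$ (again $N(v')\subseteq K'$ is disjoint from $J$).

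The main obstacle is precisely to guarantee a simplicial vertex $v'$ with $v'\notin N[z]$, so that the peeled clique avoids $z$, and I would resolve it using the partition hypothesis. Suppose for contradiction that every simplicial vertex lies in $N[z]$. For a simplicial vertex $v'$, its unique maximal clique $N[v']$ equals the part of the partition containing $v'$; and $v'\in N[z]$ forces $z\in N[v']$, so $z$ lies in that same part. Since $z$ belongs to a unique part, all simplicial vertices would then lie in the single part $Q_i\ni z$, which is a clique and hence has pairwise adjacent vertices, contradicting the existence of two non-adjacent simplicial vertices in the non-complete chordal graph $G$. Thus a simplicial vertex outside $N[z]$ always exists, and the induction goes through. (If $z$ is itself simplicial the argument degenerates pleasantly: then $N[z]=Q_i$ and $\alpha(G-N[z])=\alpha(G)-1$ is immediate.) Assembling the two directions yields the characterisation, after which the linear-time test follows from computing a perfect elimination ordering and the resulting clique partition in linear time.
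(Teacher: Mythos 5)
Your proof is correct, and while the forward direction matches the paper's argument almost exactly (peel off $N[v]$ for a simplicial $v$, use $1$-extendability of $G-N[v]$, and restore maximality of the inherited parts via an MIS through a hypothetical extending vertex $x$), your converse takes a genuinely different and arguably cleaner route. The paper roots the clique tree at a node of degree at least $2$, takes the part $C_1$ to be a leaf bag, and argues via the parent bag $P$ that every $u\in C_1\setminus\{v\}$ misses some vertex of $P$; this leans on clique-tree structure and is rather terse about why the relevant MIS of $G-C_1$ can be completed past $u$'s neighbours. You instead fix the target vertex $z$, observe that for any simplicial $v'$ the part containing $v'$ is exactly $N[v']$, and use Dirac's theorem (two non-adjacent simplicial vertices in a non-complete chordal graph) to rule out the possibility that all simplicial vertices lie in the single part containing $z$; this yields a simplicial $v'$ with $z\notin N[v']$, after which peeling $N[v']$ and appending $v'$ to an MIS of $G-N[v']$ through $z$ finishes the induction. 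Your version avoids the clique-tree machinery entirely (needing only a perfect elimination ordering for the algorithmic claim), makes the quantification over target vertices explicit, and supplies the maximality-preservation and $\alpha(G-N[v'])=\alpha(G)-1$ facts that the paper uses implicitly; the paper's version, in exchange, sets up the leaf/parent vocabulary it reuses in its linear-time algorithm. Both are valid proofs of the characterization.
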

\begin{proof}
Let $G$ be a chordal graph.
Assume first that $G$ is $1$-extendable.
We prove by induction on $\alpha(G)$ that $V(G)$ can be partitioned into $\alpha(G)$ maximal cliques, the statement being obvious if $\alpha(G) = 1$.
In the following, we assume $\alpha(G) > 1$. 
Let $v$ be a simplicial vertex of $G$ which belongs to a leaf $C_v$ of the tree decomposition of $G$ (it is thus a maximal clique). Let $G' = G[V(G) \setminus C_v]$. Observe that $\alpha(G') = \alpha(G)-1$, hence we can apply the induction hypothesis on $G'$ and obtain a partition of $V(G')$ into $C_1$, $\dots$, $C_q$ with $q = \alpha(G)-1$. We claim that for every $i \in [q]$, $C_i$ is also a maximal clique in $G$. Indeed, if there is $u \in C_v$ such that $C_i \cup \{u\}$ is also a clique, then $u$ would not belong to any maximum independent set in $G$ which violates its $1$-extendability, since any maximum independent set of $G$ must intersect $C_i$.

We also prove the other direction by induction on $\alpha(G)$. Assume that $V(G)$ can be partitioned into $C_1$, $\dots$, $C_{\alpha(G)}$, where each $C_i$ is a maximal clique. In the tree decomposition of $G$, we choose an arbitrary node of degree at least $2$ to be the root, and w.l.o.g. we may assume that $C_1$ is a leaf of the tree decomposition. Let $P$ be the maximal clique of $G$ which is the parent of $C_1$ in the tree decomposition. We denote by $v$ any simplicial vertex of $C_1$. 
Let $G' = G[V(G) \setminus C_1]$. We have $\alpha(G') = \alpha(G)-1$, and by induction $G'$ is $1$-extendable. It remains to show that every vertex of $C_1$ belongs to a maximum independent set of $G$. This is clearly the case for $v$ since it is only adjacent to vertices in $C_1$. This is also the case for every $u \in C_1 \setminus \{v\}$ since otherwise it would mean that $u$ is adjacent to every vertex of $P$, but then $P$ would not be a maximal clique in $G$.
\end{proof}

 Deciding if a given chordal graph is $k$-extendable can naively be done in time $O(n^{k}(n+m))$ by enumerating all independent sets of $G$ of size $k$ in time $O(n^k)$ and by checking the independence number of the graph when removing such an independent set. 
In particular, with this method the complexity for testing $1$-extendability of a chordal graph can be done in $O(n(n+m))$ time. The previous characterization allows to do it in linear time. 
 We first need the following observation.

\begin{lemma}\label{lemma : 1extSimplicial}
    If a chordal graph $G$ is $1$-extendable and $v$ is a simplicial vertex, then $G-N[v]$ is $1$-extendable.
\end{lemma}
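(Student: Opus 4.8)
The plan is to reason directly from the definition: for the parameter $1$, being $1$-extendable is the same as being a B-graph, i.e. every vertex lies in some maximum independent set. So it suffices to show that $G - N[v]$ is chordal (which is immediate, being an induced subgraph of a chordal graph) and that every vertex of $G - N[v]$ belongs to a maximum independent set of $G - N[v]$. The engine of the whole argument will be the single cardinality identity $\alpha(G - N[v]) = \alpha(G) - 1$.

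Next I would establish that identity, which is where simpliciality of $v$ enters. For the lower bound, any maximum independent set of $G - N[v]$ can be enlarged by adding $v$ (all neighbours of $v$ lie in $N[v]$, hence outside $G-N[v]$), giving $\alpha(G) \ge \alpha(G - N[v]) + 1$. For the reverse inequality I use that $N[v]$ is a clique, because $v$ is simplicial: any independent set of $G$ meets $N[v]$ in at most one vertex, so deleting that at-most-one vertex from a maximum independent set of $G$ yields an independent set of $G - N[v]$ of size at least $\alpha(G) - 1$. Combining the two gives $\alpha(G - N[v]) = \alpha(G) - 1$.

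With this in hand, the core step is short. Fix $u \in V(G) \setminus N[v]$. Since $G$ is $1$-extendable, there is a maximum independent set $S$ of $G$ with $u \in S$. As $N[v]$ is a clique we have $|S \cap N[v]| \le 1$, and it cannot be $0$: otherwise $S$ would be an independent set of $G - N[v]$ of size $\alpha(G) = \alpha(G-N[v]) + 1$, contradicting the identity. So $S \cap N[v] = \{w\}$ for a single vertex $w$, and since $u \notin N[v]$ we have $u \ne w$. Then $S \setminus \{w\}$ is an independent set of $G - N[v]$ of size $\alpha(G) - 1 = \alpha(G - N[v])$ containing $u$, i.e. a maximum independent set of $G - N[v]$ through $u$. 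As $u$ was arbitrary, $G - N[v]$ is $1$-extendable.

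I do not expect a genuine obstacle here; the only point requiring care is the cardinality identity, and within it the clean separation of roles: $1$-extendability of $G$ is used solely to produce a maximum independent set passing through the chosen vertex $u$, whereas simpliciality is used only to force $|S \cap N[v]| \le 1$. The degenerate case $V(G) = N[v]$ (where $G$ is a clique and $G - N[v]$ is the empty graph) is handled vacuously by the same identity, since then $\alpha(G) = 1$. As an alternative I could instead route through the partition characterization just proved, but working with $N[v]$ directly is cleaner, since $N[v]$ need not be one of the maximal cliques of an $\alpha(G)$-part partition.
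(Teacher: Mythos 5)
Your proof is correct and follows essentially the same route as the paper's: use simpliciality to get $|S\cap N[v]|\le 1$ for any independent set $S$, deduce $\alpha(G-N[v])=\alpha(G)-1$, and for each $u\notin N[v]$ take a maximum independent set of $G$ through $u$ and delete its unique vertex in $N[v]$. The only cosmetic difference is that you spell out the cardinality identity that the paper merely asserts.
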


\begin{proof}
    First, notice that $\alpha(G-N[v]) = \alpha(G)-1$. Let $u\in V(G) \setminus N[v]$, and since $G$ is $1$-extendable, there exists an MIS $S$ of $G$ such that $u\in S$. If $S$ does not contain any vertex from $N[v]$, then $S\cup \{v\}$ is an independent set of size $\alpha(G)+1$, which is not possible. In addition, since $N[v]$ induces a clique, $S$ contains exactly one vertex of $N[v]$. Thus, $S\setminus N[v]$ is a maximum independent set of $G-N[v]$ that contains $u$.
\end{proof}

We are now ready to complete the proof of Theorem \ref{thm:chordalmain}.

\begin{theorem}
    There exists an algorithm that decide if a given chordal graph is $1$-extendable running in time $O(n+m)$.
\end{theorem}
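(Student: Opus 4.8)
The idea is to turn the characterization proved above into an explicit \emph{peeling} procedure and to show that the whole thing runs in linear time. First I would compute a perfect elimination ordering $\sigma = v_1,\dots,v_n$ of $G$, which can be done in $O(n+m)$ time by a folklore lex-BFS/MCS. I then greedily build a partition of $V(G)$ into cliques as follows: scanning $\sigma$ from left to right, whenever the current vertex $v_i$ is still unassigned, I create a new part consisting of $v_i$ together with all its \emph{unassigned} neighbours, and mark all of them as assigned. Since every earlier vertex is already assigned when $v_i$ is reached, $v_i$ is the first remaining vertex in the restriction of $\sigma$ to the unassigned set, hence it is simplicial in the current graph; its remaining neighbours are later vertices that form a clique, so each created part is exactly the closed neighbourhood $N[v_i]$ of a simplicial vertex in the current graph, and is therefore a clique of $G$. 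Because removing the closed neighbourhood of a simplicial vertex lowers the independence number by exactly $1$, the number of parts produced equals $\alpha(G)$. Scanning each head's adjacency list once gives total time $O(n+m)$ for this phase.

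\textbf{Correctness criterion.} The heart of the argument is the equivalence: \emph{$G$ is $1$-extendable if and only if every part of the above partition is a maximal clique of $G$}. The ``if'' direction is immediate from the characterization established above, since the peeling yields a partition of $V(G)$ into $\alpha(G)$ cliques. For the ``only if'' direction I would argue contrapositively and, in fact, for \emph{any} partition $C_1,\dots,C_q$ of $V(G)$ into $q=\alpha(G)$ cliques: suppose some $C_i$ is not maximal, witnessed by a vertex $u\notin C_i$ adjacent to all of $C_i$. By $1$-extendability, $u$ lies in some MIS $S$ with $|S|=\alpha(G)=q$; as $S$ is independent and each $C_j$ is a clique, $S$ meets every part in exactly one vertex, so $S\cap C_i=\{w\}$ for some $w\in C_i$. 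Then $u$ and $w$ are distinct (as $u\notin C_i$) and adjacent, contradicting independence of $S$. Thus $1$-extendability forces all parts to be maximal, as required.

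\textbf{The linear-time maximality test.} The delicate point, and the step I expect to be the main obstacle, is testing maximality of \emph{all} parts within overall time $O(n+m)$, since a naive per-part test risks quadratic blow-up. I would use that a clique $C$ is maximal in $G$ iff $\bigcap_{w\in C} N[w] = C$, and compute these intersections by counting against a single global array $\mathrm{cnt}[\cdot]$. For a fixed part $C_i$, I increment $\mathrm{cnt}[x]$ for every $x\in N[w]$ over all $w\in C_i$; then $\bigcap_{w\in C_i}N[w]=\{x:\mathrm{cnt}[x]=|C_i|\}$, and $C_i$ is non-maximal precisely when some $x\notin C_i$ attains $\mathrm{cnt}[x]=|C_i|$. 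To keep this fast I reset only the touched entries after each part (lazy clearing), avoiding an $O(n)$ reinitialization per part. The cost of processing $C_i$ is $\sum_{w\in C_i}(\deg(w)+1)$, and since the parts \emph{partition} $V(G)$, the total cost telescopes to $\sum_{w\in V(G)}(\deg(w)+1)=2m+n=O(n+m)$.

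\textbf{Putting it together.} Combining the three phases, I would conclude that the algorithm computes a clique partition of size $\alpha(G)$, checks that each part is a maximal clique of $G$, and answers ``$1$-extendable'' iff all checks pass; its correctness follows from the equivalence above and its running time is $O(n+m)$ throughout, as claimed.
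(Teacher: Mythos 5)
Your proposal is correct and follows the same algorithmic skeleton as the paper's proof: peel off closed neighbourhoods of simplicial vertices along a perfect elimination ordering to produce a partition of $V(G)$ into $\alpha(G)$ cliques, then accept iff every part is a maximal clique of $G$. There are two points where you genuinely diverge, both to your advantage in terms of transparency. First, for correctness the paper recurses and invokes a separate lemma ($G-N[v]$ stays $1$-extendable when $v$ is simplicial) to justify rejecting as soon as a deeper level fails; you instead prove directly that in a $1$-extendable graph \emph{every} partition into $\alpha(G)$ cliques must consist of maximal cliques (via the pigeonhole argument that an MIS meets each part exactly once), which makes that lemma unnecessary and cleanly reduces the whole problem to the single maximality check. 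Second, the paper's maximality test is performed incrementally at each recursion level (only vertices of the just-peeled $N(v_1)$ need to be tested against deeper parts) and its linear-time accounting is left rather terse; your global test via $\bigcap_{w\in C}N[w]=C$ with a shared counting array and lazy clearing, charged as $\sum_{w\in V(G)}(\deg(w)+1)=n+2m$ because the parts partition $V(G)$, makes the $O(n+m)$ bound explicit. Both arguments are sound; yours is arguably the cleaner write-up of the same algorithm.
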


\begin{proof}
We describe an algorithm that, given a perfect elimination ordering of a chordal graph, decides if it is $1$-extendable and if it is the case, returns a partition of the vertex set into maximal cliques.

    Let $G$ be a chordal graph, and consider a perfect elimination ordering $v_1,...,v_n$ of $G$. First, we check in time $O(n+m)$ if $G$ is a clique, and thus if $\alpha(G)=1$. Otherwise, since $v_1$ is simplicial in $G$, $\alpha(G-N[v_1])=\alpha(G)-1$ and $G-N[v_1]$ has to be $1$-extendable by Lemma \ref{lemma : 1extSimplicial}. We run the algorithm on $\{v_1,...,v_n\} \setminus N[v_1]$, which checks if $G-N[v_1]$ is indeed $1$-extendable. If not, $G$ cannot be $1$-extendable, and otherwise, we obtain the clique partition $C_2,...,C_q$ in time $(n-d(v_1)) +(m-\sum_{u\in N[v_1]} d(u))$. 
    Now, observe that $\{N[v], C_2, \dots, C_q\}$ is a partition of $V(G)$ into maximal cliques unless some vertex $v \in N(v)$ is adjacent to all vertices of some $C_i$, but in that case $v$ cannot belong to any independent set of $G$ (since $\alpha(G) = q+1$), and hence $G$ is not $1$-extendable. This test can be performed in $O(m)$ time, and thus the overall complexity is $O(n+m)$.
\end{proof}

However, contrary to $\mathbf{W_k}$, this algorithm cannot be extended to a linear-time algorithm for recognizing $\mathbf{E_s}$ graphs for any $s \geqslant 2$, as shows the following rather surprising result.

\begin{theorem}
Given a chordal graph $G$ and an integer $s \geqslant 1$, the problem of deciding whether $G$ is in $\mathbf{E_s}$ is coW[2]-hard when parameterized by $s$.
\end{theorem}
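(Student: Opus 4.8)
The plan is to establish coW[2]-hardness by reducing from the complement of a known W[2]-complete problem, with \textsc{Dominating Set} being the natural choice since its parameterized hardness is standard and it interacts well with the structure of extendability. Concretely, I would give an FPT reduction showing that a graph $G$ with a distinguished independent set of size $s$ can be extended to a maximum independent set if and only if a corresponding \textsc{Dominating Set} instance has \emph{no} dominating set of the relevant size; the complementation of the yes/no answer is what turns W[2]-hardness into coW[2]-hardness. The key is that "$G$ is \emph{not} in $\mathbf{E_s}$'' is witnessed by a single independent set of size at most $s$ that fails to extend, and failing to extend is (via the characterization for chordal graphs, or directly via $\alpha$) an existential, dominating-set-like condition.

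First I would recall the chordal characterization: by the theorem just proved, a chordal graph is $1$-extendable iff $V(G)$ partitions into $\alpha(G)$ maximal cliques, and more generally an independent set $A$ fails to extend to a maximum independent set precisely when $\alpha(G - N[A]) < \alpha(G) - |A|$. The plan is to build, from a \textsc{Dominating Set} instance $(H, t)$, a chordal graph $G$ together with a gadget so that independent sets of size $s = t$ in $G$ correspond bijectively to size-$t$ vertex subsets of $H$, and such a set extends to a maximum independent set of $G$ iff the corresponding subset is \emph{not} dominating in $H$. The natural gadget attaches, for each vertex $u$ of $H$, a simplicial "clause'' clique encoding $N_H[u]$, arranged so that $G$ stays chordal (for instance, by making the selector vertices pairwise adjacent where needed and hanging pendant cliques off them); one then argues that the only obstruction to extending a selected independent set is the existence of some vertex of $H$ dominated by the selection, i.e.\ a witness to domination.

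The main obstacle I anticipate is enforcing chordality while simultaneously encoding the incidence structure of $H$, since \textsc{Dominating Set} reductions typically produce bipartite-like incidence gadgets that contain induced $4$-cycles. The remedy I would pursue is to replace each bipartite incidence connection by a clique or by a sequence of simplicial vertices consistent with a perfect elimination ordering, so that the covering/domination condition is preserved but no induced $C_4$ survives; making the selector part a single clique (so at most one selector is chosen per "slot'') together with pendant simplices per vertex of $H$ is the cleanest way to achieve this, and I would verify chordality by exhibiting an explicit perfect elimination ordering that peels off the pendant simplicial vertices first.

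Finally, I would check that the reduction is an FPT reduction in the coW[2] sense: the new parameter $s$ is bounded by a function of $t$ (ideally $s = t$ or $s = t + O(1)$), the construction runs in time $f(t)\cdot \mathrm{poly}(|V(H)|)$, and yes-instances map to yes-instances under the complemented predicate. Since \textsc{Dominating Set} is W[2]-complete, its complement is coW[2]-complete, and the equivalence "\,$G \in \mathbf{E_s}$ iff no size-$t$ set dominates $H$\," then yields coW[2]-hardness of recognizing $\mathbf{E_s}$ on chordal graphs parameterized by $s$, completing the proof.
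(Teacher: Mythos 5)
Your proposal follows essentially the same route as the paper: a parameter-preserving reduction from the complement of \textsc{Dominating Set}, with a clique of ``to-be-dominated'' vertices, an independent set of selectors, and pendant simplicial vertices for padding, so that a size-$s$ independent set fails to extend exactly when the corresponding vertex subset dominates the input graph, and chordality follows from the split-graph-plus-pendants structure. One wording slip to fix when writing it up: the obstruction to extension is that \emph{every} vertex of the clique is dominated by the selection (so the clique contributes nothing to the independent set), not the existence of \emph{some} dominated vertex; your instance-level equivalence ``$G\in\mathbf{E_s}$ iff no size-$t$ set dominates $H$'' is stated correctly elsewhere in the proposal.
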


\begin{proof}
    We reduce from the complement of \textsc{Minimum Dominating Set}. Let $(G, s)$ be the input, with $|V(G)| = n$. We may assume that $G$ does not contain a universal vertex, otherwise the problem is trivial. Construct $G'$ as follows: $V(G')$ is composed of a set $C = \{c_v : v \in V(G)\}$ inducing a clique together with two sets $I = \{i_{v} : v \in V(G)\}$ and $I' = \{i'_{v} : v \in V(G) \}$ inducing two independent sets, respectively. For every $uv \in E(G)$, we connect $i_{v}$ to $c_v$, $c_u$ and $i'_{v}$. It can be observed that $G'$ is indeed a chordal graph ($G'[C \cup I]$ is a split graph, and $I'$ is a set of vertices of degree one).
    Moreover, $G'$ is $1$-extendable, as $\alpha(G') = n+1$, and each vertex belongs to an independent set of size $n+1$ since $G$ does not contain a universal vertex. In particular, observe that any maximum independent set of $G'$ contains a vertex of $C$.
    We prove that $G' \notin \mathbf{E_s} $ if, and only if $G$ contains a dominating set of size at most $s$.
      
    If $G'\notin \mathbf{E_s}$, then there exists a set $S \subseteq V(G')$ of size at most $s$ that is not contained in any independent set of size $n+1$. If $S \cap C \neq \emptyset$, then let $S' = S \cup \{i'_v \mid i_v \notin S\}$, and notice that $|S'|=n+1$ and thus $S'$ is an MIS that contains $S$, which is not possible. So we may assume that $S \cap C = \emptyset$. Let $S'=S \cup \{i'_v \mid i_v \notin S\}$ which an independent set that contains $S$ and for any $v\in V(G)$, $S'$ contains either $i_v$ or $i'_v$. Since $S'$ cannot be extended to an MIS, it implies that $S'$ dominates $C$, which implies that $S \cap I$ is in one-to-one correspondence with a dominating set in $G$ of at most $s$.

    Conversely, assume that $G'\in \mathbf{E_s}$, and let $D$ be any subset of $V(G)$ of size at most $s$. Then the set $S = \{i_v : v \in D\}$ is an independent set of size at most $s$ in $G'$ which is thus contained in an MIS $S'$ of $G'$, which necessarily intersects $C$ on, say, $c_{v^*}$. It implies that $i_{v^*}$ does not belong to $S$ and that $c_{v^*}$ is not adjacent to any vertex of $S$, which means that $v^*$ does not belong to $D$ nor is dominated by $D$ in $G$, and thus $D$ is not a dominating set in $G$.
\end{proof}

\section{Conclusion and open questions}\label{sec:conclusion}
We extended the $\mathbf{W_k}$-hierarchy and investigated relative computational complexities within this hierarchy. We addressed open questions by demonstrating that recognizing $\mathbf{W_k}$ graphs and shedding vertices are coNP-complete on well-covered graphs. We closed the complexity gap of testing $1$-extendability, proving that the problem is $\Theta_2^p$-complete. Additionally, we provided a simple characterization of $1$-extendable chordal graphs, leading to a linear-time algorithm to decide if a graph is $1$-extendable, addressing questions left open in \cite{berge20231}.
Up to our knowledge, several questions remain open, with two of particular interest.

\begin{problem}
What is the computational complexity of recognizing well-covered triangle-free graphs?
\end{problem}

This problem was indirectly raised by Plummer in his survey \cite{Plummer1993}, where he asked if there exists a characterization of well-covered or $\mathbf{W_2}$ graphs of girth at least $4$. In fact, the complexity of recognizing well-covered $K_t$-free graphs is still open for any constant $t\geqslant 3$, and even the same problem on graphs such that $\chi(G)=t$ for any fixed $t \geqslant 3$ \cite{ALVES201836}. Notice that recognizing $\mathbf{E_s}$ triangle-free graphs is also $\Theta_2^p$-complete. In the instance $(G,H)$ of \textsc{MIS Equality}, we can assume that both graphs are triangle-free, up to an even subdivision of the edges. Then, one can check that every reduction does not create any triangle.
A similar problem, raised by Hujdurovi{\'c}, Milani{\v{c}}, Martin and Ries, concerns co-well-covered graphs, \textit{i.e.}, graphs for which the complement is well-covered.

\begin{problem}[\cite{hujdurovic2018graphs}]
What is the computational complexity of recognizing graphs which are both well-covered and co-well-covered?
\end{problem}

Both problems are linked, as any triangle-free graph with no isolated vertex is co-well-covered. Thus, if recognizing triangle-free well-covered graphs is coNP-complete, the same holds for the second problem. Conversely, if there exists a polynomial-time algorithm to check if a graph is well-covered and co-well-covered, one can recognize triangle-free well-covered graphs in polynomial time.

We conclude with two problems related to extendability. The first concerns $1$-extendable graphs with high girth. While there is a characterization of well-covered graphs of girth $5$, recognizing $1$-extendable graphs is NP-hard even for arbitrarily large girth \cite{berge20231}. Can we find some structural property for $1$-extendable graphs of large girth, such as the existence of a linear-size independent set (which is the case for well-covered graphs of girth $5$)? The second concerns the computational complexity of recognizing $\mathbf{E_s}$ graphs when $s$ is part of the input. The problem is $\Theta_2^p$-hard but not complete. One can show that it belongs to $\Pi_2^p$ in the polynomial hierarchy; it would be interesting to close this complexity gap.

\bibliographystyle{plain}
\bibliography{biblio.bib}

\begin{thebibliography}{10}

\bibitem{ALVES201836}
Sancrey~Rodrigues Alves, Konrad~K. Dabrowski, Luerbio Faria, Sulamita Klein, Ignasi Sau, and Uéverton~S. Souza.
\newblock On the (parameterized) complexity of recognizing well-covered $(r,\ell)$-graph.
\newblock {\em Theoretical Computer Science}, 746:36--48, 2018.

\bibitem{BERGE198231}
Claude Berge.
\newblock Some common properties for regularizable graphs, edge-critical graphs and b-graphs.
\newblock In Peter~L. Hammer, Alexander Rosa, Gert Sabidussi, and Jean Turgeon, editors, {\em Theory and Practice of Combinatorics}, volume~60 of {\em North-Holland Mathematics Studies}, pages 31--44. North-Holland, 1982.

\bibitem{berge20231}
Pierre Berg{\'e}, Anthony Busson, Carl Feghali, and R{\'e}mi Watrigant.
\newblock 1-extendability of independent sets.
\newblock {\em Algorithmica}, pages 1--25, 2023.

\bibitem{Caro}
Yair Caro, Mark~N. Ellingham, and J.~E. Ramey.
\newblock Local structure when all maximal independent sets have equal weight.
\newblock {\em {SIAM} J. Discret. Math.}, 11(4):644--654, 1998.

\bibitem{CHVATAL1993179}
Václav Chvátal and Peter~J. Slater.
\newblock A note on well-covered graphs.
\newblock In John Gimbel, John~W. Kennedy, and Louis~V. Quintas, editors, {\em Quo Vadis, Graph Theory?}, volume~55 of {\em Annals of Discrete Mathematics}, pages 179--181. Elsevier, 1993.

\bibitem{DEAN199467}
Nathaniel Dean and Jennifer Zito.
\newblock Well-covered graphs and extendability.
\newblock {\em Discrete Mathematics}, 126(1):67--80, 1994.

\bibitem{DetHeTop23}
Magda Dettlaff, Michael~A. Henning, and Jerzy Topp.
\newblock On {$\alpha$}-excellent graphs.
\newblock {\em Bulletin of the Malaysian Mathematical Sciences Society}, 46(2):65, 01/16 2023.
\newblock ID: Dettlaff2023.

\bibitem{feghali2023three}
Carl Feghali and Malory Marin.
\newblock Three remarks on $\mathbf{W_2}$ graphs.
\newblock {\em Theoretical Computer Science}, 990:114403, 2024.

\bibitem{Finbow}
A.~Finbow, B.~Hartnell, and R.J. Nowakowski.
\newblock A characterization of well covered graphs of girth 5 or greater.
\newblock {\em Journal of Combinatorial Theory, Series B}, 57(1):44--68, 1993.

\bibitem{FinbowC4C5}
Art~Stephen Finbow, Bert~L. Hartnell, and Richard~J. Nowakowski.
\newblock A characterization of well-covered graphs that contain neither 4- nor 5-cycles.
\newblock {\em J. Graph Theory}, 18(7):713--721, 1994.

\bibitem{hemaspaandra1998recognizing}
Edith Hemaspaandra and J{\"o}rg Rothe.
\newblock Recognizing when greed can approximate maximum independent sets is complete for parallel access to np.
\newblock {\em Information Processing Letters}, 65(3):151--156, 1998.

\bibitem{hujdurovic2018graphs}
Ademir Hujdurovi{\'c}, Martin Milani{\v{c}}, and Bernard Ries.
\newblock Graphs vertex-partitionable into strong cliques.
\newblock {\em Discrete Mathematics}, 341(5):1392--1405, 2018.

\bibitem{Klein2013}
S.~Klein, C.~P. de~Mello, and A.~Morgana.
\newblock Recognizing well covered graphs of families with special $p_4$-components.
\newblock {\em Graphs and Combinatorics}, 29(3):553--567, May 2013.

\bibitem{levit2017w2}
Vadim~E Levit and Eugen Mandrescu.
\newblock W2-graphs and shedding vertices.
\newblock {\em Electronic Notes in Discrete Mathematics}, 61:797--803, 2017.

\bibitem{levit2023recognizing}
Vadim~E Levit and David Tankus.
\newblock Recognizing $\mathbf{W_2}$ graphs.
\newblock {\em arXiv preprint arXiv:2306.17272}, 2023.

\bibitem{papadimitriou1982complexity}
Christos~H Papadimitriou and Mihalis Yannakakis.
\newblock The complexity of facets (and some facets of complexity).
\newblock In {\em Proceedings of the fourteenth annual ACM symposium on Theory of computing}, pages 255--260, 1982.

\bibitem{PLUMMER197091}
Michael~D. Plummer.
\newblock Some covering concepts in graphs.
\newblock {\em Journal of Combinatorial Theory}, 8(1):91--98, 1970.

\bibitem{Plummer1993}
Michael~D. Plummer.
\newblock Well-covered graphs: A survey.
\newblock {\em Quaestiones Mathematicae}, 16(3):253--287, 1993.

\bibitem{Prisner}
Erich Prisner, Jerzy Topp, and Preben~Dahl Vestergaard.
\newblock Well covered simplicial, chordal, and circular arc graphs.
\newblock {\em Journal of Graph Theory}, 21(2):113--119, 1996.

\bibitem{RavindraWell}
G~Ravindra.
\newblock Well-covered graphs.
\newblock {\em J. Combin. Inform. System Sci}, 2(1):20--21, 1977.

\bibitem{riege2006completeness}
Tobias Riege and J{\"o}rg Rothe.
\newblock Completeness in the boolean hierarchy: Exact-four-colorability, minimal graph uncolorability, and exact domatic number problems-a survey.
\newblock {\em J. Univers. Comput. Sci.}, 12(5):551--578, 2006.

\bibitem{Sankaranarayana}
Ramesh~S. Sankaranarayana and Lorna~K. Stewart.
\newblock Complexity results for well-covered graphs.
\newblock {\em Networks}, 22(3):247--262, 1992.

\bibitem{spakowski2000theta}
Holger Spakowski and J{\"o}rg Vogel.
\newblock $\theta_2^p$-completeness: A classical approach for new results.
\newblock In {\em FST TCS 2000: Foundations of Software Technology and Theoretical Computer Science: 20th Conference New Delhi, India, December 13--15, 2000 Proceedings 20}, pages 348--360. Springer, 2000.

\bibitem{Sta1}
Jo~Ann~Wade Staples.
\newblock {\em On some subclasses of well-covered graphs.}
\newblock Vanderbilt University, 1975.

\bibitem{TANKUS}
David Tankus and Michael Tarsi.
\newblock Well-covered claw-free graphs.
\newblock {\em Journal of Combinatorial Theory, Series B}, 66(2):293--302, 1996.

\bibitem{wagner1987more}
Klaus~W Wagner.
\newblock More complicated questions about maxima and minima, and some closures of np.
\newblock {\em Theoretical Computer Science}, 51(1-2):53--80, 1987.

\bibitem{wagner1990bounded}
Klaus~W Wagner.
\newblock Bounded query classes.
\newblock {\em SIAM Journal on Computing}, 19(5):833--846, 1990.

\end{thebibliography}

\end{document}